\documentclass[10pt]{article}
\usepackage{amsmath,amssymb}
\usepackage{algorithmic}
\usepackage{algorithm}
\usepackage{graphicx}
\usepackage{arydshln}
\usepackage{booktabs}
\usepackage{chngcntr}
\counterwithout{figure}{section}
\counterwithout{table}{section}

\textwidth 6.5in\oddsidemargin 0in
\textheight 9in\topmargin -0.5in

\newtheorem{thm}{Theorem}[section]
\newtheorem{corollary}[thm]{Corollary}
\newtheorem{lemma}[thm]{Lemma}

\newtheorem{assumption}[thm]{Assumption}

\newtheorem{proposition}[thm]{Proposition}

\newtheorem{assu1}[thm]{Assumption}

\newtheorem{definition}[thm]{Definition}

\newtheorem{example}[thm]{Example}

\newtheorem{remark}[thm]{Remark}

\newtheorem{protocol}[thm]{Protocol}

\newcommand{\tL}{\mathcal{L}}

\newcommand{\N}{\mathbb{N}}
\newcommand{\R}{\mathbb{R}}

\newcommand{\Z}{\mathbb{Z}}

\newcommand{\spa}{\mathrm{span}\,}

\newcommand{\vol}{\mathrm{vol}\,}

\newcommand{\norm}[1]{\lVert #1 \rVert}
\newcommand{\argmax}{\mathrm{argmax}\,}


\newcommand{\openbox}{\leavevmode
  \hbox to.77778em{%
    \hfil\vrule
  \vbox to.675em{\hrule width.6em\vfil\hrule}%
  \vrule\hfil}}
\newcommand{\proofname}{Proof}

\newenvironment{proof}[1][\proofname]{\par\normalfont
  \trivlist\item[\hskip\labelsep\itshape #1:]\ignorespaces
  }{\hspace*{1cm}\hspace*{\fill}\openbox \medskip\endtrivlist}



\title{Improvements in closest point search based on dual HKZ-bases.
  \footnote{Partially supported by ArmaSuisse funding ARAMIS R3210/047-12 and SNF grant No. 121874.} \\
  {\small \textbf{}}
}

\date{\today}
\author{Urs Wagner and G\'erard Maze\\
{\small {\em e-mail:\/} \{urs.wagner,gmaze\}@math.uzh.ch \vspace{-1mm} }\\
{\small Mathematics Institute\vspace{-1mm}}\\
{\small University of Z\"urich\vspace{-1mm}}\\
{\small Winterthurerstr 190, CH-8057 Z\"urich, Switzerland }
\vspace{3mm} }

\begin{document}
\maketitle

\begin{abstract}
In this paper we review the technique to solve the CVP based on dual HKZ-bases by J. Bl\"omer \cite{bl00}. The technique is based on the transference
theorems given by Banaszczyk \cite{ba93} which imply some necessary conditions on the coefficients of the closest vectors with respect to a basis whose dual
is HKZ reduced. Recursively, starting with the last coefficient, intervals of length $i$ can be derived for the $i$th coefficient of any closest vector.
This leads to $n!$ candidates for closest vectors. 
In this paper we refine the necessary conditions derived from the transference theorems, giving an exponential reduction of the number of candidates. 
The improvement is due to the fact that the lengths of the intervals are not independent. In the original algorithm the candidates for 
a coefficient pair $(a_i,a_{i+1})$ correspond to the integer points in a rectangle of volume $i \cdot (i+1)$.  
In our analysis we show that the candidates for $(a_i,a_{i+1})$ in fact lie in an ellipse with transverse and conjugate diameter $i+1$, respectively $i$. 
This reduces the overall number of points to be enumerated by an exponential factor of about $0.886^n$.
We further show how a choice of the coefficients $(a_n,\dots,a_{i+1})$ influences the interval from which $a_i$ can be chosen.
Numerical computations show that these considerations allow to bound the number of points to be enumerated by
$n^{0.75 n}$ for $10 \leq n \leq 2000$.
Under the assumption that the Gaussian heuristic for the length of the shortest nonzero vector in a lattice is tight, 
this number can even be bounded by $\frac{1}{2^{2n}} n^{n/2}$. 
\end{abstract}

\vspace{3mm}
\noindent{\bf Key Words: CVP, dual lattice, lattice problems, nearest point search} \\
\noindent{\bf Subject Classification: 68R05; 94A60} 
\vspace{3mm}

\section{Introduction}

The closest vector problem (CVP) is the problem of finding a closest lattice point of a given
lattice $\tL \subset \R^n$ to an arbitrary point $t$ in $\R^n$. While the problem is proven to be 
NP-hard (see e.g. \cite{mi02}), algorithms exist to solve the problem approximately in polynomial time. 
 Babai's nearest plane algorithm \cite{ba86} is the generic way to get an approximate solution,
and the quality of the solution substantially depends on the quality of the basis it is applied to.
The algorithm recursively selects the nearest $n-1, n-2,\dots, 0$ dimensional plane spanned by the basis vectors.
 The more orthogonal the basis vectors are, the 
better the output of the algorithm. E.g. if the basis is LLL-reduced, the point found lies within $2(4/3)^{n/2}$  
times the distance of a closest lattice point
to $t$ \cite{mi02}. If the basis vectors are even pairwise orthogonal (note that such a basis does not necessarily exist), it returns
a closest vector.
Babai's nearest plane algorithm can be modified to output an exact solution by not only considering the nearest, but all planes 
with distance up to 
a certain bound in the recursion steps. This is exactly the approach of Kannan \cite{ka87}. Note that once a plane is fixed, the problem translates 
to finding a closest lattice point in a lower dimensional lattice, namely the orthogonal projection of the lattice onto that plane.
 Clearly the number of planes to be considered in the lower dimensional lattice is dependent on
 the choice
of the plane in the upper dimension which was realized by Pohst \cite{fi85}. So instead of looking at all points inside a parallelepiped, 
the points inside a hyperellipsoid are considered.
The running time of Kannan's and Pohst's approach was proven to be $O(n^n)$ in the original considerations \cite{ka87}.
 Recently, refined analysis by Hanrot and Stehl\'e showed that applying Kannan's algorithm to a HKZ-basis the closest vectors 
can be found by enumerating $2^{O(n)}n^{0.5n}$ points.
 A more elaborate survey on different methods to solve the problem exactly can be found in e.g. \cite{ag02}. 
In \cite{bl00}, a different approach than the one of Kannan \cite{ka87} is presented. The main difference is that the basis used for closest point
search is dual HKZ reduced, e.g. it is a basis whose dual is HKZ reduced. Due to the special form of the basis, the transference theorems proven by 
Banaszczyk \cite{ba93} can be used to bound the number of planes to be considered.
In each recursion step the number of planes to be considered decreases by $1$. Having $n$ planes to consider in the first recursion step, 
this results in enumeration of $n!$ lattice points. Recently Micciancio gave an algorithm to solve the CVP in time $2^{O(n)}$ based on 
Voronoi cell computations \cite{mi10}.
The caveat in this approach is the exponential space requirement, and it is not (yet) clear how this can be reduced.

In this paper we give a refined analysis of the approach given in \cite{bl00}. We show how the overall number of points to be enumerated can be decreased. 
While in the original algorithm the number of choices of the planes is bounded independently in each step, we examine
 how the choice of a plane in early recursion steps 
influences the possible number of choices in following steps. We show how to decrease the number of lattice points to
 be enumerated by an exponential factor $(\pi/4)^{n/2}$ by deriving how the choices of the planes in 
two neighboring recursion steps are connected. Further we derive a recursive formula (in 
the dimension of the lattice) for the number of points to be enumerated when the 
choices made in early recursion steps are rigorously used to constrain the further choices. A closed form approximation of this formula is still an open
problem. However numerical computations show that this number can be bounded by $n^{0.75n}$
for $10< n \leq 2000$. Given that the shortest vector
of the dual lattice satisfies the Gaussian heuristic, we show that this number can even be bounded by $\frac{1}{2^{2n}}n^{n/2}$.

The paper is organized as follows.
In Section \ref{sec:back} we give some background and introduce notation used throughout the paper. 
In Section \ref{sec:old} the original algorithm
proposed in \cite{bl00} is described and a motivation for further studies of it is given. 
In Section \ref{sec:new} we show how the running
time can be sped up by a factor $(\pi/4) ^{n/2}$. 
In Section \ref{sec:further}, a recursive formula bounding the number of points to be enumerated is
derived and its behavior is analyzed. 
Section \ref{sec:Hermite} shows how the bound on the number of points can even be further reduced under the assumption that
the Gaussian Heuristic is tight. Finally in Section \ref{sec:Kannan}, Kannan's algorithm and the analysis by Hanrot and Stehl\'e
is quickly reviewed.
Some concluding remarks are given in Section \ref{sec:Conclusion}.

\section{Background and Notation}
\label{sec:back}

Throughout the paper $\norm{\cdot}$ denotes the euclidean norm.
Let $\tL$ be the discrete subgroup generated by integer linear combinations of $k$ linearly independent vectors $b_1,\dots,b_k$ in $\R^n$. 
We call $\tL$ a lattice of rank $k$ 
and dimension $n$.
Given a lattice basis $\{b_1,\dots,b_n\}$ of $\tL$ we will usually write it as rows of a matrix $B$ in the following way
\[
B=[b_1,\dots,b_n].
\]
The lattice points in $\tL$ are the integer linear combinations of the basis vectors,
\[
\tL=\tL(B)=\{xB  |  x \in \Z^n\}. 
\]

By $B^*=[b^*_1,\dots,b^*_n]$ we denote the usual Gram-Schmidt basis corresponding to $B=[b_1,\dots,b_n]$.
And by $\pi_i$  we denote the orthogonal projection
\[
\pi_i: \spa(b_1,\dots,b_n) \longrightarrow \spa(b_1,\dots,b_{i-1})^{\bot}
\]
Further with $\tL:=\tL(b_1,\dots,b_n)$ we have that 
\[
 \tL_i:=\pi_i(\tL)
\]
is again a lattice of rank $n-i+1$ with basis $\{ \pi_i(b_i),\dots,\pi_n(b_n) \}$.

\begin{definition}
 Given a lattice $\tL \subset \R^n$ the dual lattice $\tL^\times$ is defined by
\[
\tL^\times :=\{v \in \R^n : \langle v,w \rangle \in \Z \mbox{ } \forall w\in \tL\}
\]
\end{definition}
There exist a unique dual basis $B^\times$ for every basis $B$ of $\tL$.

\begin{definition}
 Given a basis $B=[b_1,\dots,b_k]$ for a lattice $\tL \subset \R^n$ of rank $k$, then $B^\times=[b_1^\times,\dots,b_k^\times]$ is the reverse dual basis 
if and only if
\[
b_i^\times \in \spa(b_1,\dots,b_k) \mbox{ and } \langle b_i^\times,b_j \rangle = \delta_{i,k-j+1}
\]
\end{definition}
From now on we will assume that the lattice has full rank, i.e. $k=n$.

\begin{remark}
Given $v=v_1b_1+\dots+v_nb_n \in \tL$, then 
\[
 v_i= \langle v, b_{n-i+1}^\times \rangle
\]
is the $i$-th coordinate of $v$ with respect to the basis $B$.
\end{remark}
The algorithm of this paper uses lattice bases of special form.
\begin{definition}
A basis $B=[b_1,\dots,b_n]$ of a lattice $\tL(b_1,\dots,b_n)$ is called HKZ-basis if and only if it satisfies the following two conditions
\begin{enumerate}
 \item $\frac{\langle b_i,b^*_j \rangle}{\langle b^*_j,b^*_j \rangle} \leq \frac{1}{2}$ for $j<i$ (size-reduced).
 \item The $i$-th Gram-Schmidt vector satisfies $|b_i^*| = \lambda_1(\pi_i(\tL))$. 
\end{enumerate}
\end{definition}
\begin{definition}
A basis $B=[b_1,\dots,b_n]$ of a lattice $\tL(b_1,\dots,b_n)$ is called dual HKZ-basis when its reverse-dual basis 
$B^\times=[b_1^\times,\dots,b_n^\times]$
is HKZ-reduced.
\end{definition}
\begin{lemma}
\label{lem:dualprop}
  Let $B=[b_1,\dots,b_n]$ a basis with dual basis $B^\times=[b_1^\times,\dots,b_n^\times]$. Then the dual basis
of $[b_1,\dots,b_{n-j}]$ equals $[\pi_{j+1}(b^\times_{j+1}),\dots,\pi_{j+1}(b^\times_{n})]$, $n-1\geq j\geq 0$.
\end{lemma}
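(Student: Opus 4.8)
The plan is to verify directly that the proposed family $[\pi_{j+1}(b^\times_{j+1}),\dots,\pi_{j+1}(b^\times_{n})]$ satisfies the defining property of the reverse-dual basis of $[b_1,\dots,b_{n-j}]$, which by the preceding definition characterizes it uniquely. So I need to check two things: first, that each $\pi_{j+1}(b^\times_{j+1+\ell})$ lies in $\spa(b_1,\dots,b_{n-j})$, and second, that for the reverse-dual indexing the pairing condition $\langle \pi_{j+1}(b^\times_{k}),\, b_m\rangle = \delta_{?}$ holds for $1\le m\le n-j$. Since the reverse-dual index pairs position $i$ with $k-i+1$ in a rank-$k$ lattice, here with $k=n-j$ I expect the correct statement to be that $\pi_{j+1}(b^\times_{j+1+\ell})$ is the reverse-dual vector in position $n-j-\ell$, i.e. $\langle \pi_{j+1}(b^\times_{j+1+\ell}),\, b_m\rangle = \delta_{m,\, \ell+1}$ for $1\le m\le n-j$; I will line up the indices carefully against Definition for the reverse dual before writing the final version.

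The key computation is the second point. First I observe that $\spa(b_1,\dots,b_{n-j})^{\bot} = \spa(b^\times_1,\dots,b^\times_j)$: indeed $\langle b^\times_i, b_m\rangle = \delta_{i,n-m+1}$, so $b^\times_1,\dots,b^\times_j$ are exactly those dual basis vectors orthogonal to all of $b_1,\dots,b_{n-j}$, and there are $j$ of them, matching the codimension. Consequently the orthogonal projection onto $\spa(b_1,\dots,b_{n-j})$ agrees with projection along $\spa(b^\times_1,\dots,b^\times_j)$; but I also want to relate this to $\pi_{j+1}$, the Gram–Schmidt projection onto $\spa(b_1,\dots,b_j)^{\bot}$. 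These two spaces $\spa(b_1,\dots,b_j)$ and $\spa(b^\times_1,\dots,b^\times_j)$ are generally different, so the cleanest route is: $\pi_{j+1}(b^\times_{k})$ lies in $\spa(b_1,\dots,b_j)^{\bot}\subseteq \spa(b_1,\dots,b_{n-j})$ whenever $k\ge j+1$ — that gives point one immediately, noting $\spa(b^\times_{j+1},\dots,b^\times_n)\subseteq\spa(b_1,\dots,b_n)$ and that $\pi_{j+1}$ only subtracts components in $\spa(b_1,\dots,b_j)$. For point two, write $\pi_{j+1}(b^\times_{k}) = b^\times_{k} - \sum_{r=1}^{j} c_r b_r^{*}$ for suitable scalars, or more conveniently $\pi_{j+1}(b^\times_k) = b^\times_k - (\text{component in }\spa(b_1,\dots,b_j))$. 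Then for $1\le m\le n-j$ pair with $b_m$: the pairing $\langle b^\times_k, b_m\rangle = \delta_{k,n-m+1}$ is known, and the correction term, being a combination of $b_1,\dots,b_j$, contributes $\langle \sum_r c_r b_r,\, b_m\rangle$ — this is the term I must show either vanishes or is harmless; it does vanish against the $b_m^{\times}$-side but here we pair with $b_m$ directly, so instead I should pair $\pi_{j+1}(b^\times_k)$ against the Gram–Schmidt vectors $b_m^*$ and re-expand, or alternatively observe that $\langle \pi_{j+1}(b^\times_k), b_m\rangle = \langle b^\times_k, \pi_{j+1}(b_m)\rangle$ since $\pi_{j+1}$ is a self-adjoint projection and $b^\times_k\in\im\pi_{j+1}$, and for $m\ge j+1$ we can further replace $\pi_{j+1}(b_m)$ by $b_m$ modulo $\spa(b_1,\dots,b_j)\subseteq\ker(\langle b^\times_k,\cdot\rangle\text{ is not zero})$...

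The honest statement of the main obstacle: the subtlety is exactly that $\pi_{j+1}$ projects along $\spa(b_1,\dots,b_j)$ whereas the "dual-orthogonal" decomposition is along $\spa(b^\times_1,\dots,b^\times_j)$, and these do not coincide. The resolution I will use is self-adjointness of $\pi_{j+1}$: for $k\ge j+1$ we have $b^\times_k = \pi_{j+1}(b^\times_k) + w$ with $w\in\spa(b_1,\dots,b_j)$, hence for any $m$, $\langle \pi_{j+1}(b^\times_k), b_m\rangle = \langle b^\times_k, b_m\rangle - \langle w, b_m\rangle$. For $1\le m\le j$ the desired Kronecker delta value is $0$ (since $n-j-\ell$ cannot equal such small $m$), and indeed $\langle b^\times_k,b_m\rangle=\delta_{k,n-m+1}=0$ as $k\ge j+1 > ... $ wait — rather, I will instead pair against $b_m$ for $j+1\le m\le n-j$ only after first noting $\pi_{j+1}(b^\times_k)\perp b_1,\dots,b_j$ handles $m\le j$ trivially, then for $m\ge j+1$ use $\langle \pi_{j+1}(b^\times_k),b_m\rangle=\langle b^\times_k,\pi_{j+1}(b_m)\rangle=\langle b^\times_k,b_m\rangle=\delta_{k,n-m+1}$, the middle equality because $b_m-\pi_{j+1}(b_m)\in\spa(b_1,\dots,b_j)$ and — here is the one genuine point to nail down — $b^\times_k\perp\spa(b_1,\dots,b_j)$ for $k\ge j+1$, which follows since $b^\times_k\in\im\pi_{j+1}=\spa(b_1,\dots,b_j)^\bot$. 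Setting $\ell = n-j-m$ (so $m=n-j-\ell$ ranges over $j+1,\dots,n-j$ giving $\ell=0,\dots,n-2j-1$... I will recheck ranges) and matching $k=n-m+1=j+1+\ell$ shows $\langle\pi_{j+1}(b^\times_{j+1+\ell}),b_m\rangle=\delta_{m,\,n-j-\ell}$, which is the reverse-dual pairing for the rank-$(n-j)$ basis $[b_1,\dots,b_{n-j}]$, since $(n-j)-m+1 = \ell+1$ picks out the vector in position $\ell+1$, i.e. $\pi_{j+1}(b^\times_{j+1+\ell})$. By uniqueness of the reverse-dual basis this completes the proof, and the case $j=0$ is the trivial base case $B^\times$ itself.
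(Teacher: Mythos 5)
Your argument hinges on reading $\pi_{j+1}$ as the Gram--Schmidt projection attached to the primal basis $B$, i.e.\ the orthogonal projection onto $\spa(b_1,\dots,b_j)^{\bot}$, and the crux of your point one is the inclusion $\spa(b_1,\dots,b_j)^{\bot}\subseteq \spa(b_1,\dots,b_{n-j})$. That inclusion is false: for a full-rank lattice both spaces have dimension $n-j$, so inclusion would force equality, which cannot hold because $\spa(b_1,\dots,b_j)$ meets $\spa(b_1,\dots,b_{n-j})$ in a nonzero subspace (it contains $b_1$). Your later assertion that ``$b^\times_k \in \im\pi_{j+1}=\spa(b_1,\dots,b_j)^\bot$ for $k\geq j+1$'' is also false whenever $k>n-j$, since then $\langle b^\times_k, b_{n-k+1}\rangle = 1$ with $n-k+1\leq j$. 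And the ``$m\leq j$ is handled trivially'' step fails for the same reason: the target value $\delta_{i,n-m+1}$ can equal $1$ for $m\leq j$ (namely when $i=n-m+1\geq n-j+1$), while $\pi_{j+1}(b^\times_i)$ under your reading is orthogonal to $b_m$ and gives $0$. A two-dimensional counterexample to the entire statement under this reading: $n=2$, $j=1$, $b_1=(1,0)$, $b_2=(a,1)$ with $a\neq 0$; then $b^\times_2=(1,-a)$ and the primal $\pi_2$ sends it to $(0,-a)$, which is not even in $\spa(b_1)$, whereas the dual basis of $[b_1]$ is $[(1,0)]$.

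The $\pi_{j+1}$ of this lemma must be the Gram--Schmidt projection attached to the \emph{dual} basis $B^\times$, i.e.\ the orthogonal projection onto $\spa(b^\times_1,\dots,b^\times_j)^{\bot}$. This is the reading the paper's proof uses implicitly (``Also $\pi_{j+1}(b^\times_i)\in\spa(b^\times_1,\dots,b^\times_j)^\bot$'' is then a tautology), and it is the only one consistent with how the lemma is used to prove Lemma~\ref{lem:subdual}, where the projected tail of $B^\times$ has to be an HKZ-basis with respect to $B^\times$'s own Gram--Schmidt projections. With that reading everything you were struggling to force becomes immediate: $\pi_{j+1}(b^\times_i)\in\spa(b_1,\dots,b_n)\cap\spa(b^\times_1,\dots,b^\times_j)^\bot=\spa(b_1,\dots,b_{n-j})$, and for $1\leq m\leq n-j$ the correction term $b^\times_i-\pi_{j+1}(b^\times_i)\in\spa(b^\times_1,\dots,b^\times_j)$ is orthogonal to $b_m$ (since $\langle b^\times_p,b_m\rangle=\delta_{p,n-m+1}=0$ for $p\leq j<n-m+1$), so $\langle\pi_{j+1}(b^\times_i),b_m\rangle=\langle b^\times_i,b_m\rangle=\delta_{i,n-m+1}$, which is exactly the reverse-dual pairing for $[b_1,\dots,b_{n-j}]$. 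You correctly flagged that there are two candidate projections and that the distinction matters, but you chose the wrong one; switching to the dual-basis projection closes the gap and collapses the proof to the paper's one-line verification.
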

\begin{proof}
 We start by showing that $\pi_{j+1}(b^\times_{i}) \in \spa(b_1,\dots,b_{n-j})$ for $i \geq j+1$. Clearly 
$\pi_{j+1}(b^\times_{i}) \in \spa(b_1,\dots,b_{n})$. Also $\pi_{j+1}(b^\times_{i}) \in \spa(b^\times_1,\dots,b_j^\times)^{\bot}$.
Hence 
\[
\pi_{j+1}(b^\times_{i}) \in \spa(b_1,\dots,b_{n}) \cap \spa(b^\times_1,\dots,b_j^\times)^{\bot}=\spa(b_1,\dots,b_{n-j}).
\]
It remains to show that $\langle \pi_{j+1}(b^\times_{i}),b_{k}\rangle=1$ if $k=n+1-i$ and $\langle \pi_{j+1}(b^\times_{i}),b_{k}\rangle=0$ if 
$k \in \{1,\dots,n-j\} \backslash \{n+1-i\}$.
This is straightforward as with $k<n-j+1$
\[
 \langle \pi_{j+1}(b^\times_{i}),b_k \rangle=\langle b^\times_{i},b_k \rangle.
\]
\end{proof}
This proves Lemma 1 in \cite{bl00}:
\begin{lemma}
 If $[b_1,\dots,b_n]$ is a dual HKZ-basis for $\tL(b_1,\dots,b_n)$ then $[b_1,\dots,b_k]$ is a dual HKZ-basis for $\tL(b_1,\dots,b_k)$, $k=1, \dots, n$.
\label{lem:subdual}
\end{lemma}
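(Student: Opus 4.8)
The plan is to read the statement off from Lemma \ref{lem:dualprop}, combined with the standard fact that an orthogonal projection of an HKZ-basis is again an HKZ-basis. Fix $k$. By definition, $[b_1,\dots,b_k]$ is a dual HKZ-basis for $\tL(b_1,\dots,b_k)$ precisely when its reverse-dual basis is HKZ-reduced, and Lemma \ref{lem:dualprop} (applied with $n-j=k$) identifies that reverse-dual basis with
\[
[\pi_{n-k+1}(b_{n-k+1}^\times),\dots,\pi_{n-k+1}(b_n^\times)].
\]
So it suffices to show that this list is HKZ-reduced, knowing that $B^\times=[b_1^\times,\dots,b_n^\times]$ is.

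First I would reconcile the two projections at play. Let $\pi^{\times}_{k+1}$ denote the orthogonal projection of $\spa(B)$ onto $\spa(b_1^\times,\dots,b_k^\times)^{\bot}$, that is, the analogue of $\pi_{k+1}$ attached to the ordered basis $B^\times$. From the defining relations $\langle b_i^\times,b_l\rangle=\delta_{i,n-l+1}$ one sees that each of $b_1^\times,\dots,b_k^\times$ is orthogonal to each of $b_1,\dots,b_{n-k}$, and a dimension count then forces
\[
\spa(b_1^\times,\dots,b_k^\times)^{\bot}\cap\spa(B)=\spa(b_1,\dots,b_{n-k}).
\]
The right-hand side is exactly the subspace killed by $\pi_{n-k+1}$, so $\pi_{n-k+1}$ and $\pi^{\times}_{k+1}$ coincide on $\spa(B)$, and the list above equals $[\pi^{\times}_{k+1}(b_{k+1}^\times),\dots,\pi^{\times}_{k+1}(b_n^\times)]$ --- the projection of the tail $b_{k+1}^\times,\dots,b_n^\times$ of $B^\times$ into $\spa(b_1^\times,\dots,b_k^\times)^{\bot}$.

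It then remains to invoke that HKZ-reducedness survives such a projection: if $[c_1,\dots,c_n]$ is an HKZ-basis, then for each $s$ the projected tail $[\pi_s(c_s),\dots,\pi_s(c_n)]$ is an HKZ-basis of the projected lattice, because $\pi_s$ leaves unchanged the Gram-Schmidt vectors of index $\ge s$ as well as the size-reduction coefficients $\langle c_j,c_l^*\rangle/\langle c_l^*,c_l^*\rangle$ with $l\ge s$, and because for $t\ge s$ projecting the lattice orthogonally to $\spa(c_1,\dots,c_{t-1})$ gives the same result whether or not one has first applied $\pi_s$; hence the two conditions defining an HKZ-basis are inherited. Applying this to $B^\times$ with $s=k+1$ shows that $[\pi^{\times}_{k+1}(b_{k+1}^\times),\dots,\pi^{\times}_{k+1}(b_n^\times)]$ is HKZ-reduced, that is, $[b_1,\dots,b_k]$ is a dual HKZ-basis for $\tL(b_1,\dots,b_k)$; as $k$ was arbitrary, the lemma follows. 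The only point needing real care is the index bookkeeping created by the order-reversal in the reverse-dual construction --- in particular the identification $\pi_{n-k+1}=\pi^{\times}_{k+1}$ --- while the ``projection of an HKZ-basis is HKZ'' step is routine.
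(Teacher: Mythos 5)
Your overall strategy---read off Lemma~\ref{lem:dualprop}, then invoke ``the projected tail of an HKZ-basis is again HKZ''---is exactly what the paper does (it introduces Lemma~\ref{lem:subdual} with the words ``This proves Lemma~1 in \cite{bl00}'', treating it as an immediate corollary of Lemma~\ref{lem:dualprop}). But the index bookkeeping in your reconciliation step, which you yourself flag as the only delicate point, has in fact gone wrong, and the resulting identification is false.

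Concretely: you derive
\[
\spa(b_1^\times,\dots,b_k^\times)^{\bot}\cap\spa(B)=\spa(b_1,\dots,b_{n-k}),
\]
which is correct, and note that the right-hand side is the kernel of $\pi_{n-k+1}$. But the left-hand side is the \emph{range} of your $\pi^\times_{k+1}$, not its kernel. So what this identity says is that the range of $\pi^\times_{k+1}$ equals the kernel of $\pi_{n-k+1}$ (and dually, that the kernel of $\pi^\times_{k+1}$, namely $\spa(b^\times_1,\dots,b^\times_k)$, equals the range of $\pi_{n-k+1}$): these two orthogonal projections are \emph{complementary}, $\pi_{n-k+1}+\pi^\times_{k+1}=\mathrm{Id}$ on $\spa(B)$, not equal. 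The next step then breaks outright: you claim the list $[\pi_{n-k+1}(b^\times_{n-k+1}),\dots,\pi_{n-k+1}(b^\times_n)]$ equals $[\pi^\times_{k+1}(b^\times_{k+1}),\dots,\pi^\times_{k+1}(b^\times_n)]$, but the first list has $k$ entries and the second has $n-k$, so they cannot coincide (except when $n=2k$). You have conflated $k$ with $n-k$ in two places.

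The repair is small but essential. In Lemma~\ref{lem:dualprop} the symbol $\pi_{j+1}$ is, as its own proof makes clear (``Also $\pi_{j+1}(b^\times_i)\in\spa(b^\times_1,\dots,b^\times_j)^{\bot}$''), already the projection attached to the ordered basis $B^\times$---that is, the projection onto $\spa(b^\times_1,\dots,b^\times_j)^{\bot}$, what you would write $\pi^\times_{j+1}$. So with $j=n-k$ the lemma directly identifies the reverse-dual basis of $[b_1,\dots,b_k]$ with
\[
[\pi^\times_{n-k+1}(b^\times_{n-k+1}),\dots,\pi^\times_{n-k+1}(b^\times_n)],
\]
the last $k$ vectors of $B^\times$ projected orthogonally to the first $n-k$; this is precisely the tail at position $s=n-k+1$ in the standard HKZ recursion. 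No reconciliation between $\pi$ and $\pi^\times$ is needed. Applying your (correct) ``projection of an HKZ-basis is HKZ'' step to this tail with $s=n-k+1$---not $s=k+1$---completes the proof.
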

Clearly $b^*_k \in \spa(b_1,\dots,b_k)$, and as $\langle b_i,b^*_k\rangle=0$ for $i<k$ it follows that
\[
 \langle b^*_k,b_k \rangle =\langle b^*_k,b^*_k \rangle = |b^*_k|^2.
\]
So we have that $\frac{b^*_k}{|b^*_k|^2}$ is the first basis vector of the basis dual to $[b_1,\dots,b_k]$. 
Hence we get the following corollary.
\begin{corollary}
$\frac{b^*_k}{|b^*_k|^2}$ is a shortest vector in $\tL^\times(b_1,\dots,b_k)$ and $\frac{1}{|b^*_k|}=|\lambda_1(\tL^\times(b_1,\dots,b_k))|$.
Further in a dual HKZ-reduced basis $B=[b_1,\dots,b_n]$, $\norm{b^*_k}$ is maximal under all possible bases for the sublattice $\tL(b_1,\dots,b_k)$. 
\label{cor:subgram}
\end{corollary}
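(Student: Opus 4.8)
The plan is to bootstrap everything from Lemma \ref{lem:dualprop} and the paragraph immediately preceding the statement. First I would recall what has already been established: by Lemma \ref{lem:dualprop} with $j = n-k$, the reverse-dual basis of $[b_1,\dots,b_k]$ is $[\pi_{n-k+1}(b_{n-k+1}^\times),\dots,\pi_{n-k+1}(b_n^\times)]$, and the displayed computation $\langle b_k^*,b_k\rangle = |b_k^*|^2$ shows that $b_k^*/|b_k^*|^2$ is precisely the first vector of the (ordinary) dual basis of $[b_1,\dots,b_k]$. So the only genuinely new content is the \emph{minimality/maximality} claims, and these should follow by transporting the HKZ property through duality.

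The first step is to argue that $b_k^*/|b_k^*|^2$ is a shortest nonzero vector of $\tL^\times(b_1,\dots,b_k)$. Since $B$ is a dual HKZ-basis, $B^\times$ is HKZ-reduced; by Lemma \ref{lem:subdual} the truncation $[b_1,\dots,b_k]$ is again a dual HKZ-basis, so its reverse-dual basis is HKZ-reduced. Now an HKZ-reduced basis $[c_1,\dots,c_k]$ satisfies $|c_1^*| = |c_1| = \lambda_1$ of the lattice it generates (condition 2 of the HKZ definition with $i=1$, where $\pi_1$ is the identity). Applying this to the reverse-dual basis of $[b_1,\dots,b_k]$, whose first vector is $\pi_{n-k+1}(b_{n-k+1}^\times)$, tells us its first vector is a shortest vector of $\tL^\times(b_1,\dots,b_k)$. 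I would then reconcile this with the claim about $b_k^*/|b_k^*|^2$: reversing the order of a dual basis does not change the lattice it generates, and the computation before the corollary already identifies $b_k^*/|b_k^*|^2$ as the first vector of the ordinary dual basis; one checks these two descriptions of "a shortest vector" are consistent, giving $\lambda_1(\tL^\times(b_1,\dots,b_k)) = \|b_k^*/|b_k^*|^2\| = 1/|b_k^*|$.

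The second step is the maximality of $\|b_k^*\|$. Here I would use the standard duality identity relating a Gram–Schmidt length to a first-minimum of a dual: for \emph{any} basis $[c_1,\dots,c_k]$ of $\tL(b_1,\dots,b_k)$, the last Gram–Schmidt vector $c_k^*$ satisfies $\|c_k^*\| = 1/\lambda_1\big(\tL^\times(c_1,\dots,c_k)\big)$ — or more precisely $\|c_k^*\|^{-1}$ equals the length of a specific dual lattice vector, hence $\|c_k^*\| \le 1/\lambda_1(\tL^\times)$ for the full sublattice dual, and this bound is attained exactly when that dual vector is a shortest vector, i.e. exactly in the dual-HKZ situation. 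Since $\tL^\times(c_1,\dots,c_k) = \tL^\times(b_1,\dots,b_k)$ depends only on the sublattice and not the basis, $\lambda_1$ of it is a fixed quantity, so $\|c_k^*\| \le 1/\lambda_1(\tL^\times(b_1,\dots,b_k)) = |b_k^*|$ for every basis, with equality for $B$ by Step 1. That is the asserted maximality.

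The main obstacle I anticipate is purely bookkeeping: keeping the \emph{reverse}-dual convention straight against the ordinary dual, and making sure the index shift $j = n-k$ in Lemma \ref{lem:dualprop} lines up so that "first vector of the reverse-dual of $[b_1,\dots,b_k]$" really does correspond to $b_k^*/|b_k^*|^2$ and not to some other projected dual vector. Once the dictionary between $\pi_{n-k+1}(b_{n-k+1}^\times)$ and $b_k^*/|b_k^*|^2$ is pinned down — which the pre-corollary paragraph essentially already does — both claims fall out of applying the $i=1$ case of the HKZ condition to the (dual) sublattice basis, with no further computation needed.
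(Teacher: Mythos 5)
Your proposal is correct and follows exactly the route the paper has in mind (the paper gives no written proof, just ``Hence we get the following corollary'' after the pre-corollary computation, and your argument is the natural way to fill that gap). One small clarification on the bookkeeping you flag as the main obstacle: there is in fact nothing to reconcile. The pre-corollary computation shows that $b_k^*/\lVert b_k^*\rVert^2$ lies in $\spa(b_1,\dots,b_k)$, is orthogonal to $b_1,\dots,b_{k-1}$, and pairs to $1$ with $b_k$; by the paper's reverse-dual convention ($\langle b_i^\times,b_j\rangle=\delta_{i,k-j+1}$) this is precisely the \emph{first} vector of the reverse-dual basis (equivalently, the \emph{last} vector of the ordinary dual basis, not the first as you wrote). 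And Lemma~\ref{lem:dualprop} with $j=n-k$ says that first vector is $\pi_{n-k+1}(b_{n-k+1}^\times)$. So $b_k^*/\lVert b_k^*\rVert^2 = \pi_{n-k+1}(b_{n-k+1}^\times)$ outright, by uniqueness of the reverse-dual basis; no appeal to ``reversing the order does not change the lattice'' is needed. With that identification in place, your Step 1 (Lemma~\ref{lem:subdual} plus the $i=1$ HKZ condition on the reverse-dual of the truncation) and your Step 2 (for any basis $[c_1,\dots,c_k]$ of the same sublattice, $c_k^*/\lVert c_k^*\rVert^2 \in \tL^\times(b_1,\dots,b_k)$, so $1/\lVert c_k^*\rVert \geq \lambda_1(\tL^\times)$, with equality exactly when that dual vector is shortest) are both sound and complete.
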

We will now state a theorem from the geometry of numbers by Banaszczyk \cite{ba93}. First we need two definitions.
 Let $\mu(\tL)$ denote the covering radius of a lattice, i.e.
\[
\mu(\tL) := \max_{t \in \spa(\tL)} \min_{x \in \tL} \norm{x-t}.
\]
Denote the set of all $i$-tuples of linearly independent lattice vectors as $V_i$. Then the $i$-th minimum $\lambda_i(\tL)$ of a lattice is defined as
\[
 \lambda_i(\tL) := \min_{(v_1,\dots,v_i) \in V_i} \max_{1\leq j \leq i} \norm{v_j}.
\]
\begin{thm}[Transference Theorems] 
The successive minimas $\lambda_i(\tL)$ and covering radius $\mu(\tL)$ of a lattice $\tL$ of rank $n$ satisfy the following bounds
\begin{enumerate}
 \item $\lambda_i(\tL)\cdot \lambda_{n-i+1}(\tL^\times)\leq n$, $i=1,\dots, n$,
 \item $\mu(\tL) \cdot \lambda_1(\tL^\times) \leq \frac{n}{2}$.
\end{enumerate}
\end{thm}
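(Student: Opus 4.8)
The plan is to reconstruct Banaszczyk's harmonic‑analytic proof, whose only substantive ingredient is a sharp tail estimate for the Gaussian mass of a lattice coset. For a countable $S\subseteq\R^n$ write $\rho(S)=\sum_{x\in S}e^{-\pi\norm{x}^2}$, and for a lattice $\tL$ and $v\in\R^n$ write $\rho(\tL+v)=\sum_{x\in\tL}e^{-\pi\norm{x+v}^2}$. I would first record two elementary consequences of Poisson summation together with the fact that $e^{-\pi\norm{x}^2}$ is its own Fourier transform:
\[
\text{(i)}\qquad\rho(\tL+v)=\frac{1}{\det\tL}\sum_{y\in\tL^\times}e^{-\pi\norm{y}^2}\cos\bigl(2\pi\langle y,v\rangle\bigr),
\]
so in particular $\rho(\tL+v)\le\rho(\tL)$ for every $v$, $\det(\tL)\rho(\tL)=\rho(\tL^\times)=1+\rho(\tL^\times\setminus\{0\})$; and (ii) for $c>1$, since $\rho_c(\tL+v):=\sum_{x\in\tL}e^{-\pi\norm{x+v}^2/c^2}=\rho\bigl(\tfrac1c\tL+\tfrac1c v\bigr)\le\rho\bigl(\tfrac1c\tL\bigr)=\tfrac{c^n}{\det\tL}\rho(c\tL^\times)\le\tfrac{c^n}{\det\tL}\rho(\tL^\times)$, one gets $\rho_c(\tL+v)\le c^n\rho(\tL)$.

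The key lemma I would then establish is: for every rank‑$n$ lattice $\tL$, every $v$, and every $r>\sqrt{n/(2\pi)}$, with $\theta:=2\pi r^2/n$ and $\gamma(\theta):=\theta e^{1-\theta}$,
\[
\rho\bigl(\{x\in\tL+v:\norm{x}>r\}\bigr)<\gamma(\theta)^{n/2}\rho(\tL),\qquad\text{where }\gamma(\theta)<1\text{ for }\theta>1.
\]
The proof is short: for $s\in(0,1)$ every $x$ with $\norm{x+v}>r$ satisfies $e^{-\pi\norm{x+v}^2}<e^{-\pi(1-s)r^2}e^{-\pi s\norm{x+v}^2}$, so the left side is $<e^{-\pi(1-s)r^2}\rho_{1/\sqrt s}(\tL+v)\le e^{-\pi(1-s)r^2}s^{-n/2}\rho(\tL)$ by (ii); the choice $s=n/(2\pi r^2)$, which lies in $(0,1)$ precisely when $r>\sqrt{n/(2\pi)}$, gives $\gamma(\theta)^{n/2}$.

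For part (2) I would argue by contradiction. Assume $\mu(\tL)\lambda_1(\tL^\times)>n/2$; since this product is scale‑invariant, rescale $\tL$ so that $\mu(\tL)=\lambda_1(\tL^\times)=:\ell$, whence $\ell^2>n/2$, i.e. $\theta:=2\pi\ell^2/n>\pi$. Choose $t$ with $\min_{x\in\tL}\norm{x-t}=\ell$, so every point of $\tL-t$ has norm $\ge\ell$; applying the lemma with $r\uparrow\ell$ gives $\rho(\tL-t)\le\gamma(\theta)^{n/2}\rho(\tL)=:g\,\rho(\tL)$. On the other hand, with $S:=\rho(\tL^\times\setminus\{0\})$, (i) gives $\rho(\tL-t)\ge(1-S)/\det\tL$ and $\rho(\tL)=(1+S)/\det\tL$, hence $\rho(\tL-t)/\rho(\tL)\ge(1-S)/(1+S)$; and the lemma applied to $\tL^\times$ with $r\uparrow\lambda_1(\tL^\times)=\ell$ yields $S\le g(1+S)$, i.e. $S\le g/(1-g)$, so $(1-S)/(1+S)\ge 1-2g$. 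Combining, $g\ge 1-2g$, i.e. $g=\gamma(\theta)^{n/2}\ge 1/3$. But $\theta>\pi$ forces $\gamma(\theta)<\gamma(\pi)=\pi e^{1-\pi}<0.37$, so $g<0.37^{n/2}<1/3$ as soon as $n\ge 3$ — a contradiction; the cases $n\le 2$ are handled by a slightly more careful version of the same estimate (for $n=1$ the bound is an equality). This proves (2).

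Part (1) follows from the same mechanism through a cylindrical refinement of the lemma: for a $k$‑dimensional subspace $V$ with orthogonal projection $\pi_V$, bounding $\rho\bigl(\{x\in\tL+v:\norm{\pi_V(x+v)}>r\}\bigr)$ by $\gamma(2\pi r^2/k)^{k/2}\rho(\tL)$ (identical proof, splitting only the $\pi_V$‑part of the exponent and invoking the anisotropic form of (ii)). One then runs the covering‑radius squeeze of part (2) inside a subspace whose dimension is $i$ on the primal side and $n+1-i$ on the dual side, so that $\lambda_i(\tL)$ and $\lambda_{n+1-i}(\tL^\times)$ — rather than $\mu$ and $\lambda_1$ — appear; the boundary cases $i=1$ and $i=n$ are simplest and general $i$ interpolates. \textbf{Main obstacle.} For (2) the delicate point is extracting the exact constant $\tfrac12$ rather than merely $O(1)$: the scale‑balancing trick is what collapses the degenerate regimes where $\mu$ or $\lambda_1(\tL^\times)$ is near $\sqrt{n/(2\pi)}$, but the low‑dimensional cases still need separate attention. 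For (1) the genuine difficulty is the rank reduction: orthogonal projection does not preserve the relevant minima in the needed direction (a long dual vector can have a short projection), so the subspace in which the cylindrical lemma is applied must be chosen so that \emph{both} the primal successive minimum and the dual one are controlled simultaneously — this is exactly where the min–max definition of $\lambda_{n+1-i}$ over $(n+1-i)$‑tuples of independent vectors enters essentially.
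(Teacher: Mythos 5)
The paper does not prove this theorem; it quotes it verbatim from Banaszczyk \cite{ba93}, so there is no in-paper argument to compare against and your reconstruction has to be judged on its own. The skeleton you build --- the Gaussian weight $\rho$, the two Poisson-summation consequences (i) and (ii), and the tail lemma $\rho(\{x\in\tL+v:\norm{x}>r\})<(\theta e^{1-\theta})^{n/2}\rho(\tL)$ obtained from the exponential Markov trick with the optimized split $s=n/(2\pi r^2)$ --- is indeed the engine of Banaszczyk's proof, and your derivations of (i), (ii) and the tail lemma are correct.

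Your contradiction argument for part (2) is also correct as far as it goes: rescale so that $\mu(\tL)=\lambda_1(\tL^\times)=\ell$, apply the tail lemma to $\tL-t$ at a deep hole and to $\tL^\times\setminus\{0\}$, compare against the Fourier lower bound $\rho(\tL-t)\geq(1-S)/\det\tL$, and conclude $\gamma(\theta)^{n/2}\geq 1/3$ while $\theta>\pi$. But the closing step yields a contradiction only when $n\geq 3$: since $\gamma(\pi)\approx 0.369$, one has $\gamma(\pi)^{1/2}\approx 0.61$ and $\gamma(\pi)^{1}\approx 0.37$, both exceeding $1/3$. The phrase ``a slightly more careful version'' is therefore carrying real weight at $n=2$; as written this is a genuine gap and would need either a quantitatively sharper tail estimate or a different closing of the squeeze (the $n=1$ case is indeed a direct check). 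For part (1) you do not give a proof at all, only a plan: you correctly flag that the rank-reduction is the crux (orthogonal projection does not control dual minima in the needed direction) and you gesture at a cylindrical refinement of the tail lemma, but neither that refinement nor the subspace-selection argument that makes $\lambda_i(\tL)$ and $\lambda_{n-i+1}(\tL^\times)$ interact is carried out --- and that interaction is the whole content of (1). In short: the right approach and a sound argument for part (2) when $n\geq3$, with genuine unresolved gaps at $n=2$ and throughout part (1).
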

With Corollary \ref{cor:subgram}  we have that
$\mu(\tL) \cdot \lambda_1(\tL^\times) = \frac{\mu(\tL)}{\norm{b^*_n}} $ so the second inequality in the Transference Theorems implies that 
\begin{equation}
 \mu(\tL) \leq \frac{n}{2} \norm{b^*_n}.
\label{equ:mu}
\end{equation}

\section{Original approach}
\label{sec:old}

In this section we review the approach presented in \cite{bl00}. Given a lattice $\tL=\tL(b_1,\dots,b_n)$ in $\R^n$ and a vector $t\in \R^n$, we want 
to find a vector $v$ such that 
$\norm{v-t}\leq \norm{w-t}$ for all $w\in \tL$. We assume that the basis $B=[b_1,\dots,b_n]$ is dual HKZ reduced.
\begin{enumerate}
  \item $e=e_1b^*_1+\dots+e_nb^*_n=t-v$ denotes the error vector,
  \item $e^{(i)}:=e-\sum^{n}_{j=i+1} e_{j}b^*_{j}$ is the orthogonal projection of the error vector onto $\spa(b_1,\dots,b_i)$.
  \item $\mu^{(i)}$ denotes the covering radius of $\tL(b_1,\dots,b_{i})$,
  \item $\lambda^{\times (i)}_1:=\lambda_1(\tL^{\times}(b_1,\dots,b_{i}))$.
\end{enumerate}
So suppose $v=c_1b_1+\dots+c_nb_n$, $c_i \in \Z$ is a closest vector to $t=t_1b_1+\dots+t_nb_n$, $t_i \in \R$. With (\ref{equ:mu}) we get
\[
 \norm{v-t} \leq \mu(\tL) \leq \frac{n}{2} \norm{b^*_n},
\]
and as $(c_n-t_n)^2 \norm{b^*_n}^2 \leq  \norm{v-t}^2 \leq \left(\frac{n}{2}\right)^2 \norm{b^*_n}^2$ we have
\[
 |c_n-t_n| \leq \frac{n}{2}.
\]
Hence we get an interval of length $n$ for the $n$-th coordinate $c_n$ of $v$:
\begin{equation}
\label{equ:interval}
c_n \in [t_n-n/2,t_n+n/2].
\end{equation}
As $c_n \in \Z$ we can enumerate $n$ values for $c_n$.
Note that for the orthogonal projection $t^{(n-1)}$ of $t-c_n  b_n$ onto $\spa(b_1,\dots,b_{n-1})$ we have
\[
t^{(n-1)}= t-c_n  b_n-\frac{\langle t-c_nb_n,b_n^*\rangle}{\langle b_n^*,b_n^*\rangle} b^*_n=t-c_n  b_n-(t_n-c_n)b_n^*,
\]
and hence $(t_n-c_n)=e_n$.
The following lemma \cite{bl00} allows to recursively carry the problem to proper sublattices of $\tL$ in order to derive corresponding bounds for the 
other coordinates of $v$.
\begin{lemma}
A vector $w \in \tL(b_1,\dots,b_{i})$ is a closest vector to $t-\sum_{j > i} x_j b_j$, $x_j\in \Z$ if and only if $w$ is a closest vector of the 
orthogonal projection $t^{(i)}$ of $t-\sum_{j > i} x_j b_j$ onto $\spa(b_1,\dots,b_{i})$.
\label{lem:indstep}
\end{lemma}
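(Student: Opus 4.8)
The plan is to reduce the statement to a single application of the Pythagorean theorem, exploiting the fact that the sublattice $\tL(b_1,\dots,b_i)$ is entirely contained in the subspace $S:=\spa(b_1,\dots,b_i)$ onto which $t^{(i)}$ is, by definition, the orthogonal projection.

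First I would fix notation by writing $t':=t-\sum_{j>i}x_j b_j$ and decomposing $t'=t^{(i)}+t^{\perp}$, where $t^{(i)}\in S$ is the orthogonal projection of $t'$ onto $S$ and $t^{\perp}:=t'-t^{(i)}\in S^{\perp}$. The key observation is that $\norm{t^{\perp}}$ depends only on $t'$ (hence only on $t$ and the fixed integers $x_j$), and not on any candidate lattice vector.

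Next, for an arbitrary $w\in\tL(b_1,\dots,b_i)\subseteq S$, I would note that $w-t^{(i)}\in S$ whereas $t^{\perp}\in S^{\perp}$, so these two vectors are orthogonal and the Pythagorean theorem gives
\[
\norm{w-t'}^2=\norm{(w-t^{(i)})-t^{\perp}}^2=\norm{w-t^{(i)}}^2+\norm{t^{\perp}}^2 .
\]
Since $\norm{t^{\perp}}^2$ is a nonnegative constant independent of $w$, the maps $w\mapsto\norm{w-t'}^2$ and $w\mapsto\norm{w-t^{(i)}}^2$ differ by this fixed constant on all of $\tL(b_1,\dots,b_i)$. Hence one attains its minimum at exactly the same set of lattice points as the other, which is precisely the asserted equivalence; both implications of the "if and only if" follow simultaneously.

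I do not anticipate a genuine obstacle: the only point that needs a moment's care is to ensure that the projection is taken onto the span that actually contains $\tL(b_1,\dots,b_i)$, so that the cross term in the expansion of $\norm{w-t'}^2$ vanishes identically. This is exactly how $t^{(i)}$ was introduced in the discussion preceding the lemma, so the computation above applies without modification.
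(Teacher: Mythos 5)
Your argument is correct and complete: the Pythagorean decomposition $\norm{w-t'}^2=\norm{w-t^{(i)}}^2+\norm{t^{\perp}}^2$, valid because $w-t^{(i)}\in\spa(b_1,\dots,b_i)$ while $t^{\perp}$ is orthogonal to that span, immediately shows the two objective functions differ by the constant $\norm{t^{\perp}}^2$ and hence have the same minimizers. The paper itself omits the proof (attributing the lemma to Bl\"omer's original paper), but this is exactly the standard argument one would give.
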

So given  $c_{i+1},\dots,c_n$ and $e_{i+1},\dots,e_n$ the problem reduces to finding the closest vector
to $t^{(i)}=t-\sum_{j=i+1,\dots, n} c_{j}b_{j} -\sum_{j=i+1,\dots,n} e_{j}b^*_{j}$ in the lattice $\tL(b_1,\dots,b_i)$ of rank $i$.
As by Lemma \ref{lem:subdual} $[b_1,\dots,b_i]$ is a dual HKZ basis for $\tL(b_1,\dots,b_i)$, we can recursively take the problem to a lower dimension.
In dimension $i=1$, $t^{(1)} \in \spa(b_1)$ and we set
$c_1 =\lfloor \frac{\langle t^{(1)}, b_1\rangle}{\langle b_1, b_1 \rangle}  \rceil$ 
and 
$e_1=\frac{\langle t^{(1)}, b_1\rangle}{\langle b_1, b_1 \rangle}- \lfloor \frac{\langle t^{(1)}, b_1\rangle}{\langle b_1, b_1 \rangle}  \rceil$ 
in order to get the closest lattice vector in $\tL(b_1)$ to $t^{(1)}$. In fact
\[
t^{(1)}-c_1b_1-e_1b^*_1= t-\sum^n_{j=1} c_jb_j -\sum^n_{j=1} e_jb^*_j =0,
\]
assuring that we get a valid pair of vectors $v \in \tL$ and error $e \in \R^n$ in the sense that $v+e=t$.
Hence we have the following lemma:
\begin{lemma}
Recursively we can derive $n!$ candidates for a closest vector to $t$ in $\tL$ given a dual HKZ-basis for $\tL$.
\end{lemma}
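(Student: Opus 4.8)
The plan is to run the recursion described above from $i=n$ down to $i=1$, count at each level how many integer values of the coordinate $c_i$ are admissible, and multiply. The point to isolate is that the number of choices at level $i$ is at most $i$, \emph{uniformly} over the choices $c_{i+1},\dots,c_n$ fixed at the earlier levels; granting this, the product is $n\cdot(n-1)\cdots 2\cdot 1=n!$.

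First I would set up the inductive step. Fix $c_{i+1},\dots,c_n\in\Z$ and $e_{i+1},\dots,e_n\in\R$ as produced by the previous steps, and let $t^{(i)}$ be the orthogonal projection of $t-\sum_{j>i}c_jb_j-\sum_{j>i}e_jb^*_j$ onto $\spa(b_1,\dots,b_i)$, so that by Lemma~\ref{lem:indstep} the subproblem is to find a closest vector to $t^{(i)}$ in $\tL(b_1,\dots,b_i)$. By Lemma~\ref{lem:subdual} the truncated basis $[b_1,\dots,b_i]$ is again dual HKZ, now for $\tL(b_1,\dots,b_i)$; moreover its last Gram--Schmidt vector coincides with the $i$-th Gram--Schmidt vector $b^*_i$ of $B$, since Gram--Schmidt orthogonalisation of a prefix is the prefix of the orthogonalisation. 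Hence Corollary~\ref{cor:subgram} gives $\lambda_1(\tL^\times(b_1,\dots,b_i))=1/\norm{b^*_i}$, and applying the second inequality of the Transference Theorems to $\tL(b_1,\dots,b_i)$ exactly as in (\ref{equ:mu}) yields $\mu^{(i)}\le\frac{i}{2}\norm{b^*_i}$.

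Next I would repeat, one level down, the computation that produced (\ref{equ:interval}). If $w=c_1b_1+\dots+c_ib_i$ is a closest vector to $t^{(i)}$ in $\tL(b_1,\dots,b_i)$, then $\norm{w-t^{(i)}}\le\mu^{(i)}\le\frac{i}{2}\norm{b^*_i}$. Writing $t^{(i)}=\sum_{j\le i}\tau_jb_j$ with $\tau_j\in\R$ and projecting $w-t^{(i)}$ onto the line $\R b^*_i$ (which is orthogonal to $\spa(b_1,\dots,b_{i-1})$), the component of $w-t^{(i)}$ along $b^*_i$ has magnitude $|c_i-\tau_i|\,\norm{b^*_i}$, since only $b_i$ among $b_1,\dots,b_i$ contributes to the $b^*_i$-direction; comparing with the bound on $\norm{w-t^{(i)}}$ gives $|c_i-\tau_i|\le i/2$. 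So $c_i$ lies in an interval of length $i$ and, following the convention used in the text after (\ref{equ:interval}), contributes at most $i$ integer candidates; the error coordinate $e_i=\tau_i-c_i$ is recorded and passed to level $i-1$. At $i=1$ the rounding $c_1=\lfloor\langle t^{(1)},b_1\rangle/\langle b_1,b_1\rangle\rceil$ is the unique choice, and the identity $t^{(1)}-c_1b_1-e_1b^*_1=0$ displayed above shows that every leaf of the recursion tree yields a genuine pair $v\in\tL$, $e\in\R^n$ with $v+e=t$. Since a true closest vector satisfies every one of these constraints, it appears among the $n!$ candidates.

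The part requiring the most care is the uniformity claim, i.e. that the interval at level $i$ has length $i$ regardless of $c_{i+1},\dots,c_n$. Only its centre $\tau_i$ depends on the earlier choices; its length depends on $\norm{b^*_i}$ alone, which is an invariant of the fixed basis $B$, and on the fact that $\tL(b_1,\dots,b_i)$ remains dual HKZ so that Corollary~\ref{cor:subgram} and the transference bound apply verbatim at every level. I would make these two facts explicit; everything else is the scalar computation already carried out for $i=n$, reused $n$ times.
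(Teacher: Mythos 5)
Your proof is correct and follows the same recursive reasoning that the paper lays out in Section~\ref{sec:old} just before the lemma: fix a branch $c_n,\dots,c_{i+1}$, pass to the projected subproblem via Lemma~\ref{lem:indstep}, invoke Lemma~\ref{lem:subdual} and Corollary~\ref{cor:subgram} to reapply the transference bound $\mu^{(i)}\le\frac{i}{2}\norm{b^*_i}$ at each level, and read off an interval of length $i$ for $c_i$. Your explicit remark that the interval \emph{length} is uniform over all branches (because $\norm{b^*_i}$ and the dual-HKZ property of the prefix are basis invariants, only the centre $\tau_i$ shifts) is a useful clarification that the paper leaves implicit; otherwise the argument is the paper's own.
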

We will now give a short motivation for further analysis.
The algorithm and the corresponding bound is not optimized at all. Suppose the $n$-th coordinate $e_n$ of the error vector equals $\frac{n}{2}$.
Clearly we have the following inequality $\frac{n}{2} = \frac{\langle e,b^*_n \rangle}{\langle b_n^*,b^*_n \rangle}=\norm{e}\frac{1}{\norm{b_n^*}} \cos \gamma.$
As $\norm{e}\leq \mu(\tL)$, with Equation (\ref{equ:mu}) we get $\frac{n}{2} \leq \frac{n}{2} \cos \gamma$. Consequently $\gamma = 0$ which means that the error vector points 
exactly in the direction of $b^*_n$. So the error vector can be written as multiple of $b^*_n$ and the coefficients
$e_1,\dots,e_{n-1}$ are trivially zero. In the next section we will see how the value of $e_i$ influences 
the interval length in which $e_{i-1}$ lies.

\section{First Improvement}
\label{sec:new}
Given the same problem and notation as in Section \ref{sec:old}, let us consider the following set
\[
 T_n:=\left\{(e_1,\dots,e_n)\in \R^n: v=t-\sum^n_{j=1} e_j b^*_j \in \tL \mbox{ and } |e_j|\leq \frac{j}{2} \right\}.
\]
In the last section we have seen how all elements of this set can be enumerated recursively and that due to the dual HKZ reducedness of $B$ 
in fact all closest vectors to $t$ are in the set
$\{t-\sum^n_{j=1} e_j b^*_j: (e_1,\dots,e_n) \in T_n \}$.
Further in each recursion step the value $e_i+c_i$ is given and as $c_i$ is an integer, the condition $|e_i|\leq \frac{i}{2}$ 
implies $i$ possible values for $e_i$.
So $|T_n|$ is upper bounded by $n!$. 

The goal of this section is to define a subset $T'_n \subset T_n$ still having the property that  all closest vectors to $t$ are in the set
$\{t-\sum^n_{j=1} e_j b^*_j: (e_1,\dots,e_n) \in T'_n \}$. 
We will now show how additional constraints on the $e_i$'s can be derived.
Recall that the condition $|e_i|\leq \frac{i}{2}$ comes from the fact that $\norm{e^{(i)}} \leq \mu^{(i)} \leq \frac{i}{2} \norm{b^*_i}$, 
where the second inequality is due to the dual HKZ reducedness of the basis.
This implies $\norm{e_ib^*_i} \leq \norm{e^{(i)}} \leq \mu^{(i)} \leq \frac{i}{2} \norm{b^*_i}$ and consequently $|e_i| \leq \frac{i}{2}$.
However $\norm{e^{(i)}} \leq \mu^{(i)}$ is not the only bound on $\norm{e^{(i)}}$ we have. Clearly also
\[
 \norm{e^{(i)}}^2 = \norm{e^{(k)}}^2 - \sum^k_{j=i+1} e^2_j\norm{b^*_j}^2 \leq \mu^{(k)2} - \sum^k_{j=i+1} e^2_j\norm{b^*_j}^2 \mbox{ for all } k\geq i. 
\]
Now if $e^2_j > \frac{1}{4}$, $j=i+1,\dots,k$ with $\mu^{(k)2} \leq \mu^{(i)2} + \frac{1}{4} \sum^k_{j=i+1} \norm{b^*_j}^2$ we have 
a tighter upper bound
\begin{equation}
 \norm{e^{(i)}}^2 \leq \mu^{(k)2} - \sum^k_{j=i+1} e^2_j\norm{b^*_j}^2  < \mu^{(i)2}.
\label{equ:tighter} 
\end{equation}
This observation can now be exploited to reduce the size of the intervals in which the $e_i$'s lie.
For all $i=2,\dots,n$, we derive factors $A_i(e_i) \in \R$ depending on $e_i$, such that $\norm{e^{(i-1)}} \leq A_i(e_i) \mu^{(i-1)}$
 and consequently $|e_{i-1}| \leq A_i(e_i) \frac{i-1}{2}$. Let us define
\begin{equation}
 A_i^2(c):=\frac{\frac{i^2}{4}-c^2}{\frac{i^2}{4}-\frac{1}{4}} \mbox{, } i\in \N .
\end{equation}
We obtain the following lemma
\begin{lemma}
If $c^2 \geq \frac{1}{4}$, then we have
\[
 \mu^{(i)2}-c^2 \norm{b^*_i}^2 \leq A^2_i(c) \cdot \mu^{(i-1)2}.
\]
\label{lem:A}
\end{lemma}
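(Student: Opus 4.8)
The plan is to combine the two upper bounds on $\mu^{(i)2}$ that are available from the earlier part of the section, namely the direct transference bound $\mu^{(i)2} \leq \frac{i^2}{4}\norm{b^*_i}^2$ coming from (\ref{equ:mu}) applied to $\tL(b_1,\dots,b_i)$, together with the recursive inequality $\mu^{(i)2} \leq \mu^{(i-1)2} + \frac{1}{4}\norm{b^*_i}^2$ (obtained by writing any point of $\spa(b_1,\dots,b_i)$ as a sum of a point in $\spa(b_1,\dots,b_{i-1})$ and a multiple of $b^*_i$ with coefficient of absolute value at most $\frac12$, and covering each part separately). The quantity $\mu^{(i)2}-c^2\norm{b^*_i}^2$ is what we must bound; since it is linear in $\mu^{(i)2}$ with positive coefficient, it suffices to bound $\mu^{(i)2}$ from above, and the trick is to take the right convex combination of the two available bounds so that the $\norm{b^*_i}^2$-terms cancel against $c^2\norm{b^*_i}^2$ and leave exactly $A_i^2(c)\,\mu^{(i-1)2}$.

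Concretely, first I would fix a weight $\theta\in[0,1]$ and write
\[
\mu^{(i)2} \;\leq\; \theta\Bigl(\tfrac{i^2}{4}\norm{b^*_i}^2\Bigr) + (1-\theta)\Bigl(\mu^{(i-1)2} + \tfrac14\norm{b^*_i}^2\Bigr).
\]
Subtracting $c^2\norm{b^*_i}^2$ gives
\[
\mu^{(i)2}-c^2\norm{b^*_i}^2 \;\leq\; (1-\theta)\,\mu^{(i-1)2} + \Bigl(\theta\tfrac{i^2}{4} + (1-\theta)\tfrac14 - c^2\Bigr)\norm{b^*_i}^2,
\]
and I would then choose $\theta$ so that the coefficient of $\norm{b^*_i}^2$ vanishes, i.e. $\theta\frac{i^2}{4}+(1-\theta)\frac14 = c^2$, which solves to $\theta = \frac{c^2-\frac14}{\frac{i^2}{4}-\frac14}$. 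The hypothesis $c^2\geq\frac14$ guarantees $\theta\geq 0$, and $c^2\leq \frac{i^2}{4}$ (which holds because $c$ is the value $e_i$ with $|e_i|\leq i/2$, or can simply be assumed WLOG since otherwise the left side is trivially non-positive) guarantees $\theta\leq 1$, so the convex combination is legitimate. With this $\theta$ the bound collapses to $\mu^{(i)2}-c^2\norm{b^*_i}^2 \leq (1-\theta)\mu^{(i-1)2}$, and a direct computation shows $1-\theta = \frac{\frac{i^2}{4}-c^2}{\frac{i^2}{4}-\frac14} = A_i^2(c)$, which is exactly the claim.

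I expect the only real subtlety to be the justification of the auxiliary inequality $\mu^{(i)2}\leq\mu^{(i-1)2}+\frac14\norm{b^*_i}^2$: one must argue that for any target $t'\in\spa(b_1,\dots,b_i)$, after subtracting the integer multiple of $b_i$ closest in the $b^*_i$-direction the residual has $b^*_i$-component of size at most $\frac12\norm{b^*_i}$, and the projection onto $\spa(b_1,\dots,b_{i-1})$ can then be matched to within $\mu^{(i-1)}$ by a point of $\tL(b_1,\dots,b_{i-1})$, these two lattice contributions together lying in $\tL(b_1,\dots,b_i)$. This is precisely the standard "lift the covering of the projection" argument and is stated in essence in the lines preceding the lemma; invoking (\ref{equ:tighter}) and the discussion around it makes it immediate, so the remainder is the routine algebra above. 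Finally, the consequence $|e_{i-1}|\leq A_i(e_i)\frac{i-1}{2}$ follows by applying the lemma with $c=e_i$ and chaining $\norm{e_{i-1}b^*_{i-1}}\leq\norm{e^{(i-1)}}$ with the bound $\norm{e^{(i-1)}}^2\leq\mu^{(i)2}-e_i^2\norm{b^*_i}^2$ from (\ref{equ:tighter}) and $\mu^{(i-1)2}=\frac{(i-1)^2}{4}\norm{b^*_{i-1}}^2$-type scaling, though that step is outside the lemma statement proper.
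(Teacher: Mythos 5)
Your proof is correct, and it rests on exactly the same two ingredients the paper uses: the transference bound $\mu^{(i)2}\leq\frac{i^2}{4}\norm{b^*_i}^2$ and the lifting inequality $\mu^{(i)2}\leq\mu^{(i-1)2}+\frac{1}{4}\norm{b^*_i}^2$. The paper combines them by clearing denominators and reducing in two steps: first, via the lifting inequality, the claim $(\frac{i^2}{4}-\frac14)(\mu^{(i)2}-c^2\norm{b^*_i}^2)\leq(\frac{i^2}{4}-c^2)\mu^{(i-1)2}$ is reduced to $(\frac{i^2}{4}-\frac14)(\mu^{(i)2}-c^2\norm{b^*_i}^2)\leq(\frac{i^2}{4}-c^2)(\mu^{(i)2}-\frac14\norm{b^*_i}^2)$, and then, after expanding and cancelling, to $(c^2-\frac14)\mu^{(i)2}\leq(c^2-\frac14)\frac{i^2}{4}\norm{b^*_i}^2$, which is the transference bound scaled by $c^2-\frac14\geq0$. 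That is the same algebra as your convex combination with weight $\theta=(c^2-\frac14)/(\frac{i^2}{4}-\frac14)$, just verified in reverse; your phrasing has the advantage of explaining where the expression $A_i^2(c)=1-\theta$ comes from rather than asking the reader to check a pre-computed coefficient.

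One thing to tighten: the hypothesis $c^2\leq\frac{i^2}{4}$ (equivalently $\theta\leq1$) is genuinely needed, and your fallback — that the inequality is "trivially non-positive" otherwise — does not immediately dispose of that case, since for $c^2>\frac{i^2}{4}$ the right side $A_i^2(c)\mu^{(i-1)2}$ is also negative and the two negative quantities still have to be compared. The paper uses this hypothesis silently as well (the reduction step needs $\frac{i^2}{4}-c^2\geq0$ to preserve the inequality direction when substituting $\mu^{(i-1)2}\geq\mu^{(i)2}-\frac14\norm{b^*_i}^2$). It is harmless in context because the lemma is only ever applied with $c=e_i$ or $c=e_i/C_{i+1}$, both of which satisfy $c^2\leq\frac{i^2}{4}$; but if you want the lemma to stand alone as stated, the clean fix is to add $c^2\leq\frac{i^2}{4}$ to the hypotheses rather than appeal to a WLOG.
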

\begin{proof}
 We have to show that 
\[
\left(\frac{i^2}{4}-\frac{1}{4}\right) \left(\mu^{(i)2}-c^2 \norm{b^*_i}^2\right)\leq  \left(\frac{i^2}{4}-c^2\right) \mu^{(i-1)2}.
\]
Since $ \mu^{(i)2}- \frac{1}{4} \norm{b^*_i}^2 \leq  \mu^{(i-1)2}, $
it is sufficient to show that
\[
\left(\frac{i^2}{4}-\frac{1}{4}\right) \left(\mu^{(i)2}-c^2 \norm{b^*_i}^2\right)\leq  \left(\frac{i^2}{4}-c^2\right)\left(\mu^{(i)2}- \frac{1}{4} \norm{b^*_i}^2\right) .
\]
This is true since
\[
 \left(c^2-\frac{1}{4}\right)\mu^{(i)2}  \leq \left(c^2 - \frac{1}{4}\right)\frac{i^2}{4}\norm{b^{*}_i}^2.
\]
\end{proof}

We can now prove the core lemma, which gives the factor by which the error vector is smaller than the covering radius.

\begin{lemma} 
Under the previous assumptions and notations:
\begin{equation} 
\norm{e^{(i-1)}}^2  \leq A^2_i(e_i) \cdot \mu^{(i-1)2}.
\end{equation}
\label{prop:anker}
\end{lemma}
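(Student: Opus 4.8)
The plan is to reduce the target inequality to Lemma \ref{lem:A} applied at a cleverly chosen index, and to distinguish two cases according to the size of $e_i$. First observe that if $e_i^2 \leq \frac{1}{4}$, then $A_i^2(e_i) \geq 1$, so the claim follows immediately from the unconditional bound $\norm{e^{(i-1)}}^2 \leq \mu^{(i-1)2}$ (recall $\norm{e^{(i-1)}} \leq \mu^{(i-1)}$ since $e^{(i-1)}$ is the projection onto $\spa(b_1,\dots,b_{i-1})$ of $t^{(i-1)}$, which lies within $\mu^{(i-1)}$ of the sublattice $\tL(b_1,\dots,b_{i-1})$; this is exactly the reasoning that gave $|e_{i-1}|\leq\frac{i-1}{2}$ in the original algorithm). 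So the interesting case is $e_i^2 > \frac{1}{4}$, which is precisely the hypothesis needed to invoke Lemma \ref{lem:A} with $c = e_i$.

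In that case I would start from the identity
\[
\norm{e^{(i-1)}}^2 = \norm{e^{(i)}}^2 - e_i^2 \norm{b^*_i}^2,
\]
which holds because $e^{(i)} = e^{(i-1)} + e_i b^*_i$ is an orthogonal decomposition (the vectors $b^*_1,\dots,b^*_n$ are pairwise orthogonal and $e^{(i-1)}\in\spa(b_1,\dots,b_{i-1})=\spa(b^*_1,\dots,b^*_{i-1})$). Then bound $\norm{e^{(i)}}^2 \leq \mu^{(i)2}$ using the covering-radius estimate for $\tL(b_1,\dots,b_i)$, obtaining
\[
\norm{e^{(i-1)}}^2 \leq \mu^{(i)2} - e_i^2 \norm{b^*_i}^2.
\]
Now apply Lemma \ref{lem:A} with $c = e_i$ (legitimate since $e_i^2 \geq \frac{1}{4}$) to conclude $\mu^{(i)2} - e_i^2\norm{b^*_i}^2 \leq A_i^2(e_i)\,\mu^{(i-1)2}$, which chains with the previous display to give the desired $\norm{e^{(i-1)}}^2 \leq A_i^2(e_i)\,\mu^{(i-1)2}$.

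I expect the only genuine subtlety to be bookkeeping: making sure that $\norm{e^{(i)}}^2 \leq \mu^{(i)2}$ is actually available at the relevant recursion level — i.e. that $e^{(i)}$ really is the error of the closest-vector subproblem in $\tL(b_1,\dots,b_i)$, so that Lemma \ref{lem:indstep} and Equation (\ref{equ:mu}) (applied to the sublattice, which is dual HKZ by Lemma \ref{lem:subdual}) indeed yield $\norm{e^{(i)}}\leq\mu^{(i)}\leq\frac{i}{2}\norm{b^*_i}$. Once that is pinned down, the argument is the two-line computation above plus the case split on $e_i^2$ versus $\frac14$; there is no real obstacle, since Lemma \ref{lem:A} has already absorbed the one inequality ($\mu^{(i)2} - \frac14\norm{b^*_i}^2 \leq \mu^{(i-1)2}$) that requires thought.
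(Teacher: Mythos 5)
Your proof is correct and follows essentially the same route as the paper: case split on whether $e_i^2 \leq \frac14$ (in which case $A_i^2(e_i)\geq 1$ and the trivial bound $\norm{e^{(i-1)}}\leq\mu^{(i-1)}$ suffices) or $e_i^2>\frac14$ (in which case the orthogonal decomposition $\norm{e^{(i-1)}}^2=\norm{e^{(i)}}^2-e_i^2\norm{b_i^*}^2$, the bound $\norm{e^{(i)}}\leq\mu^{(i)}$, and Lemma \ref{lem:A} with $c=e_i$ finish the job). The only cosmetic difference is where you place the boundary $e_i^2=\frac14$ between the two cases, which is immaterial since both arguments cover it.
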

\begin{proof}
We separate the two cases where $|e_i|<\frac{1}{2}$, $|e_i|\geq \frac{1}{2}$ respectively.
If $|e_i|<\frac{1}{2}$, then $A^2_i(e_i)>1$ and the proposition follows by $\norm{e^{(i-1)}}^2 \leq \mu^{(i-1)2}$.
If $|e_i|\geq \frac{1}{2}$, the claim follows from
\[
 \norm{e^{(i-1)}}^2 = \norm{e^{(i)}}^2-e^2_i \norm{b^*_i}^2 \leq \mu^{(i)2}-e^2_i \norm{b^*_i}^2,
\]
and Lemma \ref{lem:A}.
\end{proof}
So with $e^2_{i-1}\norm{b^*_{i-1}}^2 \leq \norm{e^{(i-1)}}^2\leq  A^2_i(e_i) \cdot \mu^{(i-1)2}$ and $\frac{\mu^{(i-1)2}}{\norm{b^*_{i-1}}^2} \leq \frac{(i-1)^2}{2} $
we immediately obtain the following bound.
\begin{corollary}
Using the notation from before, 
\begin{equation}
\label{equ:pairs}
 e^2_{i-1} \left(\frac{i^2}{4}-\frac{1}{4}\right)+e^2_i\frac{(i-1)^2}{4}  \leq  \frac{i^2}{4}\frac{(i-1)^2}{4}.
\end{equation}
\end{corollary}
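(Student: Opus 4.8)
The plan is to obtain (\ref{equ:pairs}) by simply chaining Lemma \ref{prop:anker} together with the dual-HKZ covering-radius bound (\ref{equ:mu}) applied to the sublattice $\tL(b_1,\dots,b_{i-1})$, plus the trivial observation that $e_{i-1}b^*_{i-1}$ is one orthogonal summand of $e^{(i-1)}$.

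First I would record that, because $e^{(i-1)}=e_1b^*_1+\dots+e_{i-1}b^*_{i-1}$ and the Gram--Schmidt vectors are pairwise orthogonal,
\[
e^2_{i-1}\norm{b^*_{i-1}}^2 \;\leq\; \norm{e^{(i-1)}}^2 .
\]
Combining this with Lemma \ref{prop:anker} gives $e^2_{i-1}\norm{b^*_{i-1}}^2 \leq A^2_i(e_i)\,\mu^{(i-1)2}$. Next, since $[b_1,\dots,b_{i-1}]$ is again a dual HKZ-basis for $\tL(b_1,\dots,b_{i-1})$ by Lemma \ref{lem:subdual}, the argument leading to (\ref{equ:mu}), applied to this sublattice, yields $\mu^{(i-1)2}\leq \frac{(i-1)^2}{4}\norm{b^*_{i-1}}^2$. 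Substituting this and the definition $A^2_i(e_i)=\bigl(\tfrac{i^2}{4}-e^2_i\bigr)\big/\bigl(\tfrac{i^2}{4}-\tfrac14\bigr)$, and dividing through by $\norm{b^*_{i-1}}^2>0$, leaves
\[
e^2_{i-1}\;\leq\; \frac{\tfrac{i^2}{4}-e^2_i}{\tfrac{i^2}{4}-\tfrac14}\cdot\frac{(i-1)^2}{4}.
\]
Finally, multiplying both sides by $\tfrac{i^2}{4}-\tfrac14>0$ and moving the $e_i$-term to the left-hand side produces exactly (\ref{equ:pairs}).

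Every step is an elementary manipulation, so I do not expect a real obstacle; the only points that deserve a word of care are that $\norm{b^*_{i-1}}^2>0$ (which is why the division preserves the inequality) and that $\tfrac{i^2}{4}-\tfrac14>0$ for $i\geq 2$ (which is why the final multiplication preserves it). One should also note that $A^2_i(e_i)\geq 0$ throughout, so that the right-hand side above is genuinely nonnegative; this holds because the recursion only ever produces coefficients with $|e_i|\leq i/2$.
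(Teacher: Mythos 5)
Your proof is correct and follows exactly the route the paper intends: combine $e_{i-1}^2\norm{b^*_{i-1}}^2 \leq \norm{e^{(i-1)}}^2 \leq A_i^2(e_i)\mu^{(i-1)2}$ (Lemma \ref{prop:anker}) with the transference bound $\mu^{(i-1)2}\leq \tfrac{(i-1)^2}{4}\norm{b^*_{i-1}}^2$ on the dual-HKZ sublattice, then clear the denominator of $A_i^2$. Note that the sentence preceding the corollary in the paper states $\mu^{(i-1)2}/\norm{b^*_{i-1}}^2 \leq \tfrac{(i-1)^2}{2}$, which appears to be a typo for $\tfrac{(i-1)^2}{4}$; you correctly used the latter, which is what the stated inequality (\ref{equ:pairs}) actually requires.
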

So we define 
\[
T'_n:=\{(e_1,\dots,e_n)\in T_n: \mbox{Equation } (\ref{equ:pairs}) \mbox{ holds for all } i=2,\dots,n \}.
\]
We are interested on an upper bound on the volume of $T'_n$ giving us an upper bound on the number of points we have to enumerate to get the closest vectors.
Let us first assume that $n$ is even. Clearly
\[
T'_n \subset T''_n := \bigotimes^{n/2}_{i=1} \{(e_{2i-1},e_{2i})\in \R^n: \mbox{Equation } (\ref{equ:pairs}) \mbox{ holds } \}.
\]
The volume of $T''_n$ can be computed as the product of the volumes of the $2$-dimensional ellipses. 
\[
 \vol(T''_n)=  \left(\frac{\pi}{4}\right)^{ n/2} \prod^{n/2}_{i=1} \left(2i (2i-1) \frac{i}{\sqrt{i^2-1/4}}\right) =\left(\frac{\pi}{4}\right)^{ n/2 } n!  \prod^{n/2}_{i=1}  \frac{i}{\sqrt{i^2-1/4}}.
\]
As
\[
\prod^{n/2}_{i=1}  \frac{i}{\sqrt{i^2-1/4}}= \sqrt{\frac{4}{3}}\prod^{n/2}_{i=2}  \frac{i}{\sqrt{i^2-1/4}}
<\sqrt{\frac{4}{3}}\prod^{n/2}_{i=2}  \frac{i}{\sqrt{i^2-1}}=\sqrt{\frac{4}{3}} \underbrace{\sqrt{\frac{n}{n/2+1}}}_{<\sqrt{2}} 
<2.
\]
We obtain
\[
 \vol(T''_n)< 2\left(\frac{\pi}{4}\right)^{ n/2 } n! .
\]
In the case where $n$ is odd consider 
\[
T'_n \subset T'''_n :=  \{|e_1|\leq \frac{1}{2}\}  \otimes \bigotimes^{(n-1)/2}_{i=1} \{(e_{2i},e_{2i+1})\in \R^n: \mbox{Equation } (\ref{equ:pairs}) \mbox{ holds } \}.
\]
The volume of $T'''_n$ then is
\[
 \vol(T'''_n)=  \left(\frac{\pi}{4}\right)^{ (n-1)/2} \prod^{(n-1)/2}_{i=1} 2i (2i+1) \frac{2i+1}{\sqrt{(2i+1)^2-1}} = \left(\frac{\pi}{4}\right)^{ (n-1)/2} n! \prod^{(n-1)/2}_{i=1} \frac{i+1/2}{\sqrt{(i+1/2)^2-1/4}}.
\]
As  $\prod^{(n-1)/2}_{i=1} \frac{i+1/2}{\sqrt{(i+1/2)^2-1/4}}< \prod^{(n-1)/2}_{i=1} \frac{i}{\sqrt{i^2-1/4}}$ we get the same bound
\[
 \vol(T'''_n)< 2 \left(\frac{\pi}{4}\right)^{ n/2 } n! .
\]

\begin{thm}
 Given a dual HKZ basis $B$ of a full rank lattice $\tL \subset \R^n$ all closest vectors to a given point $t\in \R^n$ can be found
by recursively enumerating at most $2 \left(\frac{\pi}{4}\right)^{ n/2 } n!$ lattice points.
\end{thm}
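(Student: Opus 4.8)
The plan is to assemble the pieces that have already been established in Sections~\ref{sec:old} and~\ref{sec:new} into the claimed enumeration bound. Recall that by the discussion following Lemma~\ref{lem:indstep}, every closest vector $v$ to $t$ arises as $v = t - \sum_{j=1}^n e_j b^*_j$ where the tuple $(e_1,\dots,e_n)$ lies in $T_n$, and hence---since Corollary to Lemma~\ref{prop:anker} shows that the coordinates of such a tuple additionally satisfy Equation~(\ref{equ:pairs}) for every $i=2,\dots,n$---in the refined set $T'_n$. So the first step is simply to record that it suffices to bound the number of lattice points corresponding to tuples in $T'_n$, equivalently the number of integer-feasible branches in the recursive enumeration restricted by~(\ref{equ:pairs}).

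Second, I would translate "number of points enumerated" into a volume bound. In the recursion, once $e_{i+1},\dots,e_n$ are fixed, the quantity $e_i + c_i$ is determined and $c_i \in \Z$, so the admissible values of $e_i$ are the integer translates of a fixed real number lying in an interval dictated by the constraints; the count of such values is at most one more than the length of that interval, but more usefully, the total number of leaves of the recursion tree is bounded by the volume of the region cut out in $\R^n$ (this is the standard Minkowski-type counting argument: each leaf contributes a unit cube, shifted by the fixed fractional parts, and these cubes are essentially disjoint and contained in a slight inflation of $T'_n$; since the inflation only affects lower-order terms one can absorb it, and in fact the paper's bound has enough slack). Thus it remains to bound $\vol(T'_n)$.

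Third, I would carry out the volume estimate exactly as set up just before the theorem: split into the parity cases. For $n$ even, observe $T'_n \subset T''_n$, the Cartesian product over $i=1,\dots,n/2$ of the two-dimensional ellipses defined by~(\ref{equ:pairs}) in the variables $(e_{2i-1},e_{2i})$; each such ellipse has semi-axes $i$ and $(i-1)\cdot i/\sqrt{i^2-1/4}$ in the appropriate normalization, so $\vol(T''_n) = (\pi/4)^{n/2} n! \prod_{i=1}^{n/2} i/\sqrt{i^2-1/4}$, and the telescoping/comparison estimate $\prod_{i=1}^{n/2} i/\sqrt{i^2-1/4} < 2$ already displayed gives $\vol(T''_n) < 2(\pi/4)^{n/2} n!$. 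For $n$ odd I would instead use $T'_n \subset T'''_n$, pairing $(e_{2i},e_{2i+1})$ for $i=1,\dots,(n-1)/2$ and treating $e_1$ via the single constraint $|e_1|\le 1/2$; the analogous computation, together with the monotonicity remark $\prod \frac{i+1/2}{\sqrt{(i+1/2)^2-1/4}} < \prod \frac{i}{\sqrt{i^2-1/4}}$, yields the same bound $2(\pi/4)^{n/2} n!$. Combining the two cases gives $\vol(T'_n) < 2(\pi/4)^{n/2} n!$ for all $n$, and by the counting argument of the second step the number of lattice points enumerated is at most $2(\pi/4)^{n/2} n!$, which is the assertion.

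The main obstacle, and the only place requiring genuine care rather than bookkeeping, is the second step: rigorously justifying that the number of integer branches in the recursion is controlled by the continuous volume $\vol(T'_n)$ rather than by something larger. One must check that the ellipse constraint~(\ref{equ:pairs}) relating $e_{i-1}$ and $e_i$ is respected branch-by-branch in the recursion (it is, since Lemma~\ref{prop:anker} is proved under exactly the "previous assumptions and notations" of the recursive setup, i.e. it holds for every partial choice $e_{i},\dots,e_n$ that survives), and that replacing a count of integer points in each slice by the length of that slice plus the unavoidable $+1$ does not inflate the product beyond the stated bound---here one uses that the product of the slice-lengths is already the volume and that the paper's leading constant $2$ and the gap between $(\pi/4)^{n/2}n!$ and the trivial $n!$ leave ample room, or alternatively one phrases the whole argument as "at most $\vol(T'_n)$ leaves" directly via disjoint shifted unit cells whose union sits inside $T'_n$ enlarged by $1/2$ in each coordinate, and notes that this enlargement is dominated. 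I would present this as a short lemma-free paragraph, since the pairing decomposition already makes the cube-packing transparent coordinate-pair by coordinate-pair.
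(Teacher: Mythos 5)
Your argument reproduces the paper's proof: reduce to bounding $\vol(T'_n)$ via the parity split into $T''_n$ (respectively $T'''_n$), compute the product-of-ellipse volumes, and use the estimate $\prod_{i\geq 1} i/\sqrt{i^2-1/4} < 2$ to arrive at $2\left(\frac{\pi}{4}\right)^{n/2} n!$. The count-to-volume passage you flag as the delicate step is in fact asserted without comment in the paper, so you have introduced no new gap relative to the source (though note that the cube-inflation you invoke would cost a polynomial, not merely constant, factor, which the explicit constant $2$ does not obviously absorb — this step is equally informal in the original); otherwise the approach and the calculation are identical.
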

So with $\sqrt{\pi/4} \approx 0.886$ we get an exponential gain of roughly $0.886^n$ compared to the original considerations.

\section{Further improvement} \label{sec:further}

Recall the starting point of the considerations of the previous section. We have an upper bound on $\norm{e^{(i)}}^2$:
\begin{equation}
 \norm{e^{(i)}}^2 \leq \mu^{(k)2} - \sum^k_{j=i+1} e^2_j\norm{b^*_j}^2. 
\label{equ:bound}
\end{equation}
Note that the bound (\ref{equ:bound}) is decreasing with increasing $e_j$'s and in fact if they satisfy $|e_j|>\frac{1}{2}$ then 
as in Equation (\ref{equ:tighter}),
\[
 \mu^{(k)2} - \sum^k_{j=i+1} e^2_j\norm{b^*_j}^2 < \mu^{(i)2}.
\]
In the original approach (see Section \ref{sec:old}), only the case $k=i$ was considered. In Section \ref{sec:new} we considered
 the case where $k=i+1$ and we got that  
\[
\norm{e^{(i)}}^2  \leq A^2_{i+1}(e_{i+1}) \cdot \mu^{(i)2},
\]
where $A_{i+1}^2(e_{i+1}):=\left(\frac{(i+1)^2}{4}-e_{i+1}^2\right)\left(\frac{(i+1)^2}{4}-\frac{1}{4}\right)^{-1}$. From that we derived that pairs of coefficients $(e_i,e_{i+1})$
lie inside a $2$-dimensional ellipsoid of volume $\frac{\pi}{4}(i+1)^2\sqrt{\frac{i}{i+2}}$.
The goal of this section is to generalize this method to more than just 2-tuples of coefficients.
Consider 
\begin{eqnarray}
\norm{e^{(i-1)}}^2  = \norm{e^{(i)}}^2 - e^2_{i}\norm{b^*_{i}}^2  & \leq&  
  A^2_{i+1}(e_{i+1}) \cdot \mu^{(i)2} - e^2_{i}\norm{b^*_{i}}^2 \nonumber
\\ &=& A^2_{i+1}(e_{i+1})\left(\mu^{(i)2}- \frac{e^2_{i}}{A^2_{i+1}(e_{i+1})}\norm{b^*_{i}}^2 \right) .
\end{eqnarray}
So under the condition that $\frac{e^2_{i}}{A^2_{i+1}(e_{i+1})} \geq \frac{1}{4}$, by Lemma \ref{lem:A} we have
\[
\norm{e^{(i-1)}}^2 \leq A^2_{i+1}(e_{i+1})  A^2_{i}\left( \frac{e^2_{i}}{A^2_{i+1}(e_{i+1})}\right) \mu^{(i-1)2}. 
\]
Note that if $|e_{i+1}|, |e_i| > \frac{1}{2}$ , $A^2_{i+1}(e_{i+1})<1$ and $A^2_{i}\left( \frac{e^2_{i}}{A^2_{i+1}(e_{i+1})}\right)<1$.
Clearly the bigger $|e_{i+1}|, |e_i|$ the smaller the bound on $\norm{e^{(i-1)}}$ becomes.

\begin{definition}
For $e_n,\dots,e_1$ recursively define $C^2_{n+1},\dots,C^2_1$ by
\[
C^2_{n+1}:=1 
 \mbox{ \ and \ }
C^2_{i-1}:=\left\{\begin{array}{ll} 1 & \mbox{if } |e_{i-1}| < \frac{1}{2}, \\ 
			C_{i}^2 A^2_{i-1}\left(\frac{e_{i-1}}{C_{i}}\right) & \mbox{else.} \end{array}\right. 
\]
\end{definition}
Note that $C^2_i\leq 1$ for all $i$.
\begin{proposition}
\label{prop:eC}
For $i=n,\dots,2$ we have
\[
\norm{e^{(i-1)}}^2 \leq C^2_{i} \mu^{(i-1)2}.
\]
\end{proposition}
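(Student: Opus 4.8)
The plan is to establish Proposition~\ref{prop:eC} by a single downward induction on $i$, from $i=n$ down to $i=2$, arranged so that the recursive clause defining $C^2_{i-1}$ from $C^2_i$ is exactly what is needed to pass from level $i$ to level $i-1$. Before starting I would record two facts. First, since $v=c_1b_1+\dots+c_nb_n$ is a closest vector to $t$, Lemma~\ref{lem:indstep} together with the identity $e^{(i)}=t^{(i)}-\sum_{j\leq i}c_jb_j$ shows that for every $i$ the truncation $e^{(i)}$ is the error vector of the closest-vector instance induced in $\tL(b_1,\dots,b_i)$ with target $t^{(i)}$; hence $\norm{e^{(i)}}\leq\mu^{(i)}$. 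Second, by Lemma~\ref{lem:subdual} the sub-basis $[b_1,\dots,b_i]$ is again dual HKZ reduced with last Gram-Schmidt vector $b^*_i$, so (\ref{equ:mu}) applied to $\tL(b_1,\dots,b_i)$ gives $\mu^{(i)2}\leq\frac{i^2}{4}\norm{b^*_i}^2$.

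Reformulated, the induction hypothesis at level $i$ reads $\norm{e^{(i)}}^2\leq C^2_{i+1}\mu^{(i)2}$. The base case $i=n$ is immediate: $C^2_{n+1}=1$ and $\norm{e^{(n)}}^2=\norm{e}^2\leq\mu^{(n)2}$ by the first fact. For the induction step I would start from $\norm{e^{(i-1)}}^2=\norm{e^{(i)}}^2-e_i^2\norm{b^*_i}^2$ and apply the hypothesis to get
\[
\norm{e^{(i-1)}}^2\leq C^2_{i+1}\left(\mu^{(i)2}-\frac{e_i^2}{C^2_{i+1}}\norm{b^*_i}^2\right).
\]
If $|e_{i}|<\frac{1}{2}$, then $C^2_{i}=1$ and nothing more is needed than $\norm{e^{(i-1)}}^2\leq\mu^{(i-1)2}$, which is the first fact at level $i-1$. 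If $|e_{i}|\geq\frac{1}{2}$, I set $c:=e_{i}/C_{i+1}$; since $C^2_{i+1}\leq 1$ (noted right after the definition of the $C^2_i$) we have $c^2=e_{i}^2/C^2_{i+1}\geq e_{i}^2\geq\frac{1}{4}$, so Lemma~\ref{lem:A} applies and gives $\mu^{(i)2}-c^2\norm{b^*_i}^2\leq A^2_{i}(c)\mu^{(i-1)2}$. Substituting into the display yields $\norm{e^{(i-1)}}^2\leq C^2_{i+1}A^2_{i}(c)\mu^{(i-1)2}=C^2_{i}\mu^{(i-1)2}$, the claim at level $i-1$.

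One point I would make sure to verify --- it is what makes the bound $C^2_i\geq 0$ meaningful --- is that the rescaled argument stays in the range $c^2\leq\frac{i^2}{4}$ on which $A^2_i(c)$ is nonnegative. This follows from $c^2\norm{b^*_i}^2=e_i^2\norm{b^*_i}^2/C^2_{i+1}\leq\norm{e^{(i)}}^2/C^2_{i+1}\leq\mu^{(i)2}\leq\frac{i^2}{4}\norm{b^*_i}^2$, using the induction hypothesis and the second fact. I would also observe that the induction invokes $A^2_j$ only for $j\geq 2$, so the denominator $\frac{j^2}{4}-\frac{1}{4}$ in Lemma~\ref{lem:A} never vanishes, consistent with the proposition asserting the bound only for $i\geq 2$.

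I do not expect a real obstacle here: the argument is essentially forced once the right induction hypothesis $\norm{e^{(i)}}^2\leq C^2_{i+1}\mu^{(i)2}$ is isolated, and it is a direct generalization of Lemma~\ref{prop:anker}, which is the case $C_{i+1}=1$. The only points requiring a little care are the two preliminary facts --- that the truncation of a closest vector is again bounded by the covering radius in every sublattice, via Lemma~\ref{lem:indstep}, and that the transference bound $\mu^{(i)}\leq\frac{i}{2}\norm{b^*_i}$ survives restriction to $\tL(b_1,\dots,b_i)$, via Lemma~\ref{lem:subdual} --- and the verification that dividing $e_i$ by $C_{i+1}$ keeps the argument $c$ in the regime $\frac{1}{4}\leq c^2\leq\frac{i^2}{4}$ where Lemma~\ref{lem:A} both applies and returns a nonnegative factor; that is precisely where $C^2_{i+1}\leq 1$ is used.
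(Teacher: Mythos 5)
Your proof is correct and follows essentially the same reverse induction as the paper's, with the same case split on $|e_i|<\frac{1}{2}$ versus $|e_i|\geq\frac{1}{2}$ and the same application of Lemma~\ref{lem:A} to $c=e_i/C_{i+1}$, with $c^2\geq\frac{1}{4}$ guaranteed by $C^2_{i+1}\leq 1$. The only cosmetic differences are that you start the base case one step earlier, at the trivial $\norm{e^{(n)}}^2\leq\mu^{(n)2}$, so that Lemma~\ref{prop:anker} becomes the first output of your inductive step rather than a separately cited base case, and you add the explicit check that $c^2\leq\frac{i^2}{4}$ so $A^2_i(c)\geq 0$, a point the paper leaves implicit.
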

\begin{proof}
The proof goes by reverse induction on $i$.
For $i=n-1$ the result follows by Proposition \ref{prop:anker}. Assume the results holds for $i$.
If $|e_i| < \frac{1}{2}$, $C^2_i=1$ and the proposition follows trivially.
For the case $|e_i|\geq \frac{1}{2}$, note that
\[
 \norm{e^{(i-1)}}^2=\norm{e^{(i)}}^2-e^2_{i}\norm{b^*_i}^2 \leq C^2_{i+1}\mu^{(i)2} - e^2_i\norm{b^*_i}^2= C^2_{i+1}\mu^{(i)2} - e^2_i\norm{b^*_i}^2.
\]
We also have that  $\frac{e^2_i}{C^2_{i+1}}>\frac{1}{4}$
and with Lemma \ref{lem:A},
\[
 C^2_{i+1}\left(\mu^{(i)2} - \frac{e^2_i}{C^2_{i+1}}|b^*_i|^2\right) \leq C^2_{i+1}A^2_{i}\left(\frac{e_i}{C_{i+1}}\right) \mu^{(i-1)2}=C^2_{i} \mu^{(i-1)2} .
\]
\end{proof}
With the Transference Theorems the following corollary follows immediately:
\begin{corollary}
 For $i=n,\dots,2$ we have
\[
e_{i-1}^{2} \leq C^2_{i} \left(\frac{i-1}{2}\right)^2.
\]
\label{cor:C_bound}
\end{corollary}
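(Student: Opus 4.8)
The plan is to derive this corollary from Proposition \ref{prop:eC} in exactly the same way the interval $|c_n - t_n| \leq \tfrac{n}{2}$ was obtained from (\ref{equ:mu}) in Section \ref{sec:old}, only now applied to the sublattice $\tL(b_1,\dots,b_{i-1})$ and with the sharpened constant $C_i^2$ in place of $1$. First I would unwind the definitions: since $e^{(i-1)} = \sum_{j=1}^{i-1} e_j b^*_j$ is the orthogonal projection of $e$ onto $\spa(b_1,\dots,b_{i-1})$, the summand $e_{i-1}b^*_{i-1}$ is one of the pairwise orthogonal components of $e^{(i-1)}$. By the Pythagorean identity this gives immediately
\[
e_{i-1}^2\,\norm{b^*_{i-1}}^2 \leq \norm{e^{(i-1)}}^2 .
\]

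Next I would invoke Proposition \ref{prop:eC} to bound the right-hand side, obtaining $\norm{e^{(i-1)}}^2 \leq C_i^2\,\mu^{(i-1)2}$. The remaining ingredient is the analogue of (\ref{equ:mu}) for the truncated lattice: by Lemma \ref{lem:subdual} the basis $[b_1,\dots,b_{i-1}]$ is again dual HKZ reduced for $\tL(b_1,\dots,b_{i-1})$, so Corollary \ref{cor:subgram} yields $\lambda_1(\tL^\times(b_1,\dots,b_{i-1})) = 1/\norm{b^*_{i-1}}$, and the second Transference Theorem gives
\[
\mu^{(i-1)} \leq \frac{i-1}{2}\,\norm{b^*_{i-1}} .
\]
Chaining the three inequalities produces $e_{i-1}^2\,\norm{b^*_{i-1}}^2 \leq C_i^2\left(\frac{i-1}{2}\right)^2\norm{b^*_{i-1}}^2$, and dividing by $\norm{b^*_{i-1}}^2 > 0$ finishes the argument.

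The hard part is essentially nonexistent here, since the substantive estimate is carried by Proposition \ref{prop:eC}; the only points that require a moment of care are the orthogonality argument justifying $e_{i-1}^2\norm{b^*_{i-1}}^2 \leq \norm{e^{(i-1)}}^2$, and the observation that the covering-radius bound must be applied to the sublattice $\tL(b_1,\dots,b_{i-1})$ — which is legitimate precisely because dual HKZ reducedness is inherited by initial-segment sublattices (Lemma \ref{lem:subdual}). Hence the corollary follows immediately, as claimed.
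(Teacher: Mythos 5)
Your proof is correct and takes exactly the route the paper intends: the paper states the corollary follows ``immediately'' from Proposition \ref{prop:eC} and the Transference Theorems without writing out the details, and your chain --- Pythagorean lower bound $e_{i-1}^2\norm{b^*_{i-1}}^2 \leq \norm{e^{(i-1)}}^2$, then Proposition \ref{prop:eC}, then the sublattice covering-radius bound $\mu^{(i-1)} \leq \frac{i-1}{2}\norm{b^*_{i-1}}$ obtained via Lemma \ref{lem:subdual}, Corollary \ref{cor:subgram}, and the second Transference Theorem --- is precisely that omitted reasoning.
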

Under the assumption that a few consecutive $e_j$'s are at least one half in absolute value, e.g. $|e_k|,\dots,|e_{i}| \geq \frac{1}{2}$, 
the next lemma will give a closed form expression for $C_i$ depending on $e_k,\dots,e_{i}$. As a corollary of the next lemma and 
Proposition \ref{prop:eC}, we will see how $e_k,\dots,e_i$ satisfy a $(k-i+1)$-dimensional ellipsoid equation.

\begin{lemma} Let $n\geq k \geq i \geq 1$. Under the assumption that $|e_k|,\dots,|e_{i}| \geq \frac{1}{2}$ and 
either $k=n$ or $|e_{k+1}|< \frac{1}{2}$ we have
\[ 
C^2_i= \frac{\prod^k_{j=i}\frac{j^2}{4}}{\prod^k_{j=i}\left( \frac{j^2}{4} -\frac{1}{4}\right)}- \sum^k_{j=i+1}\left(e^2_j\frac{\prod^{j-1}_{l=i} \frac{l^2}{4}}{\prod^j_{l=i}\left( \frac{l^2}{4} -\frac{1}{4}  \right)}\right)-e^2_{i}\frac{1}{\frac{i^2}{4} -\frac{1}{4}}.
\]
\end{lemma}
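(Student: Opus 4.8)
The plan is to show that, under the stated hypotheses, the piecewise definition of $C^2_i$ collapses to a single first–order linear recursion, and then to solve that recursion by reverse induction on $i$, descending from $k$. First I would unfold the definition. Since either $k=n$ or $|e_{k+1}|<\tfrac12$, in both cases $C^2_{k+1}=1$ — by definition in the first case, by the upper branch of the recursion in the second. For every index $j$ with $i\le j\le k$ the hypothesis $|e_j|\ge\tfrac12$ forces the lower branch, and using $A^2_j(c)=\bigl(\tfrac{j^2}{4}-c^2\bigr)\bigl(\tfrac{j^2}{4}-\tfrac14\bigr)^{-1}$ this reads
\[
C^2_j \;=\; C^2_{j+1}\,A^2_j\!\Bigl(\tfrac{e_j}{C_{j+1}}\Bigr)\;=\;\frac{\tfrac{j^2}{4}\,C^2_{j+1}-e_j^2}{\tfrac{j^2}{4}-\tfrac14},
\]
a well-defined recursion for $i\ge 2$, since then the denominators $\tfrac{j^2}{4}-\tfrac14$ are positive and the intermediate quantities $C^2_{j+1}$ are nonzero. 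So the task reduces to solving this recursion with initial value $C^2_{k+1}=1$.

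I would then prove the asserted closed form by reverse induction on $i\in\{k,k-1,\dots\}$, taking as induction hypothesis the very same identity with lower index $i+1$ in place of $i$; this is legitimate because the hypotheses of the lemma, restricted, give exactly $|e_k|,\dots,|e_{i+1}|\ge\tfrac12$ together with (still) $k=n$ or $|e_{k+1}|<\tfrac12$. The base case $i=k$ is immediate from the displayed recursion and $C^2_{k+1}=1$: it yields $C^2_k=\bigl(\tfrac{k^2}{4}-e_k^2\bigr)\bigl(\tfrac{k^2}{4}-\tfrac14\bigr)^{-1}$, which is exactly the right-hand side of the claim for $i=k$ (empty middle sum, single final term). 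For the inductive step I substitute the formula for $C^2_{i+1}$ into $C^2_i=\bigl(\tfrac{i^2}{4}C^2_{i+1}-e_i^2\bigr)\bigl(\tfrac{i^2}{4}-\tfrac14\bigr)^{-1}$ and match terms: multiplying the first term of the hypothesis by $\tfrac{i^2}{4}\bigl(\tfrac{i^2}{4}-\tfrac14\bigr)^{-1}$ extends the products $\prod_{j=i+1}^{k}$ to $\prod_{j=i}^{k}$; the generic summand $e_j^2\prod_{l=i+1}^{j-1}(\cdots)/\prod_{l=i+1}^{j}(\cdots)$ absorbs the same factor and becomes the $j$-th summand for lower index $i$; the leftover term $e_{i+1}^2\bigl(\tfrac{(i+1)^2}{4}-\tfrac14\bigr)^{-1}$ of the hypothesis becomes precisely the missing $j=i+1$ summand of the enlarged sum; and $-e_i^2$ contributes the new final term $-e_i^2\bigl(\tfrac{i^2}{4}-\tfrac14\bigr)^{-1}$. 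Collecting everything reproduces the claimed expression for $C^2_i$, completing the induction.

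The argument is essentially mechanical; the only place that needs care is the telescoping of the products $\prod\frac{l^2}{4}$ and $\prod\bigl(\frac{l^2}{4}-\frac14\bigr)$ as the lower summation limit drops from $i+1$ to $i$, and checking that the stray $e_{i+1}^2$-term of the induction hypothesis lands in exactly the right slot of the new sum. I therefore expect no genuine obstacle: in writing it up I would put both sides over the common denominator $\prod_{j=i}^{k}\bigl(\tfrac{j^2}{4}-\tfrac14\bigr)$ and simply compare numerators, which turns the verification into a one-line bookkeeping of the sums $\sum_{j=i+1}^{k}$ versus $\sum_{j=i+2}^{k}$.
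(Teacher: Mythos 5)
Your proof is correct and follows essentially the same route as the paper: reverse induction on $i$ with base case $i=k$, using $C^2_{k+1}=1$ and the hypothesis $|e_j|\ge\tfrac12$ for $i\le j\le k$ to unfold the recursion to $C^2_i=\tfrac{i^2/4}{\,i^2/4-1/4\,}C^2_{i+1}-\tfrac{e_i^2}{\,i^2/4-1/4\,}$, then substituting the induction hypothesis and regrouping the products. The only difference is cosmetic: you spell out the telescoping bookkeeping that the paper compresses into ``plugging in $C^2_{i+1}$ immediately gives the result.''
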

\begin{proof}
We go by reverse induction on $i$.
 The case $i=k$ follows by definition. Assume the result holds for $i+1$. Then
\[
C^2_{i}= A^2_{i}\left(\frac{e_{i}}{C_{i+1}}\right) C_{i+1}^2=  C^2_{i+1} \frac{\frac{i^2}{4}}{\frac{i^2}{4} - \frac{1}{4}}-e^2_{i}\frac{1}{\frac{i^2}{4} - \frac{1}{4}}.
\]
Plugging in $C^2_{i+1}$ immediately gives the result.
\end{proof}
Note that from $e^{(i)2}\leq C^2_{i+1} \mu^{(i)2}$ and the Transference Theorems we obtain $e^2_i\leq \frac{i^2}{4} C^2_{i+1}$.
So under the condition that $|e_k|,\dots,|e_{i+1}| > \frac{1}{2}$ we have that
\begin{equation}
 e^2_i \leq \frac{i^2}{4} \left(\frac{\prod^k_{j=i+1}\frac{j^2}{4}}{\prod^k_{j=i+1}\left( \frac{j^2}{4} -\frac{1}{4}\right)}- \sum^k_{j=i+2}\left(e^2_j\frac{\prod^{j-1}_{l=i+1} \frac{l^2}{4}}{\prod^j_{l=i+1}\left( \frac{l^2}{4} -\frac{1}{4}  \right)}\right) - e^2_{i+1}\frac{1}{\frac{(i+1)^2}{4} -\frac{1}{4}} \right).
\label{equ:e_i}
\end{equation}
The following corollary follows immediately.
\begin{corollary}
If $|e_k|,\dots,|e_{i+1}| > \frac{1}{2}$ for $1 \leq i<k \leq n$, then
\begin{equation}
 e^2_i + \sum^k_{j=i+1}\left(e^2_j\frac{\prod^{j-1}_{l=i} \frac{l^2}{4}}{\prod^j_{l=i+1}\left( \frac{l^2}{4} -\frac{1}{4}  \right)}\right) \leq \frac{\prod^k_{j=i}\frac{j^2}{4}}{\prod^k_{j=i+1}\left( \frac{j^2}{4} -\frac{1}{4}\right)}.
\label{equ:cond}
\end{equation}
\end{corollary}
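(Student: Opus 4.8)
The plan is to derive the claimed inequality directly from Equation (\ref{equ:e_i}) by elementary algebraic manipulation, without any new geometric input. Observe that (\ref{equ:e_i}) is exactly the bound on $e_i^2$ coming from $e^{(i)2}\leq C_{i+1}^2\mu^{(i)2}$ together with the Transference Theorems, under the standing hypothesis $|e_k|,\dots,|e_{i+1}|>\frac12$ (which guarantees that the closed-form expression for $C_{i+1}^2$ from the preceding Lemma is valid, since the chain of $A_j$'s never ``resets'' to $1$). So the only work is to move all the $e_j$-terms to the left-hand side and collect the factors $i^2/4$.

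First I would rewrite (\ref{equ:e_i}) as
\[
e_i^2 \;\leq\; \frac{i^2}{4}\cdot\frac{\prod_{j=i+1}^k \frac{j^2}{4}}{\prod_{j=i+1}^k\left(\frac{j^2}{4}-\frac14\right)}
\;-\;\sum_{j=i+2}^k\left(e_j^2\,\frac{\frac{i^2}{4}\prod_{l=i+1}^{j-1}\frac{l^2}{4}}{\prod_{l=i+1}^j\left(\frac{l^2}{4}-\frac14\right)}\right)
\;-\;e_{i+1}^2\,\frac{\frac{i^2}{4}}{\frac{(i+1)^2}{4}-\frac14}.
\]
Now I bring the two negative sums to the left. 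The key bookkeeping observation is that $\frac{i^2}{4}\prod_{l=i+1}^{j-1}\frac{l^2}{4}=\prod_{l=i}^{j-1}\frac{l^2}{4}$, so the coefficient of $e_j^2$ for $i+2\leq j\leq k$ becomes $\dfrac{\prod_{l=i}^{j-1}\frac{l^2}{4}}{\prod_{l=i+1}^{j}\left(\frac{l^2}{4}-\frac14\right)}$, which is precisely the coefficient appearing in (\ref{equ:cond}); and for the boundary term $j=i+1$ we get $\dfrac{\frac{i^2}{4}}{\frac{(i+1)^2}{4}-\frac14}=\dfrac{\prod_{l=i}^{i}\frac{l^2}{4}}{\prod_{l=i+1}^{i+1}\left(\frac{l^2}{4}-\frac14\right)}$, so the two families of terms merge into the single sum $\sum_{j=i+1}^k e_j^2\,\dfrac{\prod_{l=i}^{j-1}\frac{l^2}{4}}{\prod_{l=i+1}^{j}\left(\frac{l^2}{4}-\frac14\right)}$ from (\ref{equ:cond}). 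Likewise, the right-hand side $\frac{i^2}{4}\cdot\frac{\prod_{j=i+1}^k \frac{j^2}{4}}{\prod_{j=i+1}^k\left(\frac{j^2}{4}-\frac14\right)}$ equals $\frac{\prod_{j=i}^k\frac{j^2}{4}}{\prod_{j=i+1}^k\left(\frac{j^2}{4}-\frac14\right)}$, which is the right-hand side of (\ref{equ:cond}). Assembling these three identifications yields exactly (\ref{equ:cond}).

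There is essentially no hard step here; the statement is a reindexing corollary of (\ref{equ:e_i}), and the proof is the one-line ``move the terms across and absorb the leading $i^2/4$ into the telescoping product.'' The only point requiring a little care is the treatment of the $j=i+1$ summand, which is written separately in (\ref{equ:e_i}) but fits the general pattern of (\ref{equ:cond}) once one checks the empty-product convention $\prod_{l=i+1}^{i}(\cdot)=1$; I would spell that case out explicitly so the reader sees that no term is lost. It is also worth noting for completeness that the hypothesis $|e_k|,\dots,|e_{i+1}|>\frac12$ is used only to license (\ref{equ:e_i}) (equivalently, the closed form for $C_{i+1}^2$); once (\ref{equ:e_i}) is in hand the rest is a formal rearrangement valid unconditionally.
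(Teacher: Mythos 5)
Your proof is correct and matches the paper's approach: the paper presents this Corollary with only the remark that it ``follows immediately'' from Equation~(\ref{equ:e_i}), and your rearrangement (distributing the factor $\tfrac{i^2}{4}$ into the telescoping products and absorbing the separately-written $j=i+1$ term via the empty-product convention) is exactly what is meant. The only quibble --- which the paper itself glosses over identically --- is that the Corollary's hypothesis omits the Lemma's clause ``$k=n$ or $|e_{k+1}|<\tfrac12$,'' so the closed form you invoke is strictly an upper bound on $C_{i+1}^2$ rather than its exact value; this is harmless because the recursion $C_j^2=\bigl(C_{j+1}^2\,\tfrac{j^2}{4}-e_j^2\bigr)/\bigl(\tfrac{j^2}{4}-\tfrac14\bigr)$ is monotone increasing in $C_{j+1}^2$.
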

As in Section \ref{sec:new} we now define a set $S_n$ such that all closest vectors to $t$ are in 
$\{t-\sum^n_{j=1} e_j b^*_j: (e_1,\dots,e_n) \in S_n \}$: 
\[
 S_n:=\left\{(e_1,\dots,e_n)\in T_n:  |e_{i}| \leq \frac{i}{2} \cdot C_{i+1} \right\}.
\]
Note that by Corollary \ref{cor:C_bound}, $S_n$ has the desired property.

\subsection{Bounding the set $S_n$}

We will now bound the number of elements in $S_n$.
Clearly 
\[
S_n \subset S'_n:=\left\{(e_1,\dots,e_n)\in \R^n:  |e_{i}| \leq \frac{i}{2} \cdot C_{i+1},~\ i=1,\dots,n \right\}.
\]
For $k=1,\dots,n$, when $C_{k+1}=1$, define $a_k=\vol\left(S_k'\right)$ and set $a_0=1$.
Clearly $a_{1}=1$ and $a_{n}=\vol(S_n')$.
Further define 
\[
 V_{j,k} := \left\{\begin{array}{ll} 1  \mbox{ for } j=k, \\ 
			\vol\left\{(e_{j+1},\dots,e_{k})\in \R^{k-j}: \mbox{ Equation (\ref{equ:cond}) holds} \right\}  & \mbox{else}. \end{array}\right. 
\]
For a given element $(e_1,\dots,e_k) \in \R^k$ we can
define 
\[
\tau:=\max_{1 \leq i \leq k}\left\{i:  C_{i}=1\right\} = \max_{1 \leq i \leq k}\left\{i: |e_i| \leq \frac{1}{2} \right\},
\] 
allowing to write
\[
 a_{k}=a_{\tau-1}V_{\tau,k}.
\]
We can now partition $S'_n$ into disjunct sets, depending on the possible values of $\tau$, as
\[
 S'_k= \bigcup_{1\leq \tau \leq k} \left\{(e_1,\dots,e_k)\in \R^k:  |e_{i}| \leq \frac{i}{2} \cdot C_{i+1} \mbox{ and } \max_{1 \leq i \leq k}\left\{i: C_{i}=1\right\} =\tau \right\}
\]
So in the case where $C_{k+1}=1$, we have
\begin{equation}
 a_{k}=\sum_{1\leq j \leq k} a_{j-1} V_{j,k}.
\label{equ:a_k}
\end{equation}
In particular
\begin{equation*}
 \vol(S_n')=a_{n}=\sum_{1\leq j \leq n} a_{j-1} V_{j,n}.
\end{equation*}
Using the well known formula for the volume of an ellipsoid, $V_{j,k}$ can be computed (see Appendix \ref{sec:appendix}) as
\begin{equation*}
 V_{j,k} =  \frac{\pi^{(k-j)/2}}{\Gamma\left(\frac{k-j}{2}+1\right)} \frac{k!}{j! 2^{k-j}} \left(\frac{k+1}{j+1}\right)^{1/2} \left(\frac{k}{k+1}\right)^{(k-j)/2}.
\end{equation*}
Clearly
\begin{equation*}
  V_{j,k} \leq \left(\frac{\pi}{4}\right)^{(k-j)/2} \frac{1}{\Gamma\left(\frac{k-j}{2}+1\right)} \frac{k!}{j! } \left(\frac{k+1}{j+1}\right)^{1/2}.
\end{equation*}
Plugging this into Equation (\ref{equ:a_k}), for  $k=1,\dots,n$ we get
\begin{equation*}
 a_{k} \leq \sum_{1\leq j \leq k} a_{j-1} \left(\frac{\pi}{4}\right)^{(k-j)/2} \frac{1}{\Gamma\left(\frac{k-j}{2}+1\right)} \frac{k!}{j! } \left(\frac{k+1}{j+1}\right)^{1/2},
\end{equation*}
which leads to 
\begin{eqnarray*}
  \frac{a_{k}}{\sqrt{k+1} (k+1)!} \left(\frac{4}{\pi}\right)^{k/2} 
& \leq &\frac{1}{k+1} \sum_{1\leq j \leq k} \frac{a_{j-1}}{\sqrt{j+1} j!} \left(\frac{4}{\pi}\right)^{j/2} \frac{1}{\Gamma\left(\frac{k-j}{2}+1\right)}
\\ &\leq&  \frac{1}{k+1} \sum_{1\leq j \leq k} \frac{a_{j-1}}{\sqrt{j} j!} \left(\frac{4}{\pi}\right)^{j/2} \frac{1}{\Gamma\left(\frac{k-j}{2}+1\right)}.
\end{eqnarray*}
So we have a recursively defined upper bound for $a_k$. We will now derive a nicer recursion, the goal to upper bound $a_k$ remains the same however.
Define
\begin{equation*}
\sigma_k:=\frac{a_{k}}{\sqrt{k+1} (k+1)!} \left(\frac{4}{\pi}\right)^{k/2} \mbox{ for } k=0,\dots,n.
\end{equation*}
As $a_0=1$, we get the following recursive relation
\begin{equation*}
\sigma_k \leq \left\{\begin{array}{ll} 1 & \mbox{for } k=0, \\ 
			\frac{1}{k+1} \sum^{k}_{j=1} \frac{\sigma_{j-1}}{\Gamma\left(\frac{k-j}{2}+1\right)}  & \mbox{for } k \geq 1. \end{array}\right. 
\end{equation*}
So setting $s_0:=1$ and
\[
s_k:=\frac{1}{k+1} \sum^{k}_{j=1} \frac{s_{j-1}}{\Gamma\left(\frac{k-j}{2}+1\right)},
\]
 then 
$\sigma_k\leq s_k$ and it is enough to derive an upper bound on $s_k$. 
We can define the following sequence for $n\geq 2$:
\begin{eqnarray}
c_n &:=& \frac{\log s_n }{n\log n }+\frac{1}{2 n } +\frac{(n+2)\log (n+2)}{n \log n} -\frac{1}{\log n} +\frac{\log (\pi/4)}{2 \log n} \nonumber \\ 
& \geq & \frac{\log\left(s_n \sqrt{n}(n+1)!\left(\pi/4\right)^{n/2}\right)}{n\log n } \label{st}\\ 
&  \geq & \frac{\log a_n }{n\log n },\nonumber
\end{eqnarray}
where Eq.~(\ref{st}) is valid because  $(n+1)! \leq e \left(\frac{n+2}{e}\right)^{n+2}$.
\begin{figure}
\centering
\includegraphics[scale=0.8]{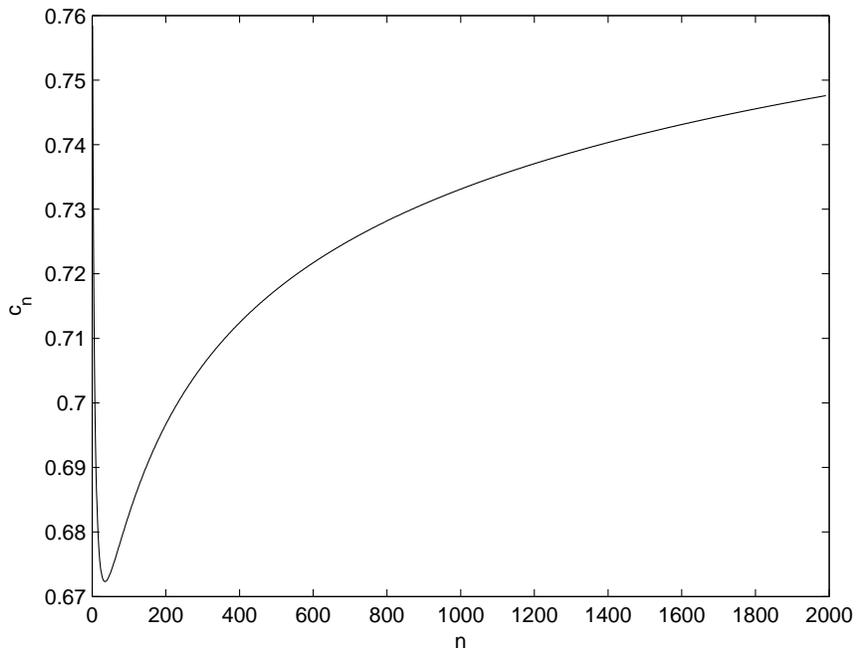}
\caption{The behaviour of $c_n$ for $10 \leq n \leq 2000$.}
\label{fig:c_n}
\end{figure} 
Then
\[
\vol(S_n')=a_{n}\leq n^{c_nn}.
\]
Deriving any useful and provable explicit bound on $s_n$, and therefore on $c_n$, seems to be a nontrivial task. However, numerical computations of $c_n$ suggest that $c_n < 0.75$ for $10< n\leq 2000$ (compare  Figure \ref{fig:c_n}).

\section{Hermite factor}
\label{sec:Hermite}
In this section we will point out the influence of the hermite factor of the dual lattice on the running time of the algorithm.
While the considerations in the previous section give a reduction in the running time for all lattices, this 
section will only give an improvement in the case where the length $\lambda^\times_1$ of a shortest vector 
in the dual lattice satisfies $\lambda^\times_1 \geq \left(\vol(\tL^\times)\right)^{1/n}$.
Let 
\[
 \alpha:=\sqrt{\gamma(\tL^\times)}= \frac{\lambda_1(\tL^\times)}{\left(\vol(\tL^\times)\right)^{1/n}},
\]
denote the hermite factor of the dual lattice $\tL$. 
Now consider the following bound on the length of the error vector $e$
\begin{equation}
\norm{e}^{2}= e^2_1\norm{b^*_1}^2+\dots+e^2_n\norm{b^*_n}^2 \leq \left(\frac{n}{2}\right)^2 \norm{b^*_n}.
\label{equ:volHe}
\end{equation}
Again the number of coefficients satisfying this inequality can be approximated by the volume of the 
ellipsoid:
\begin{multline}
V:=\left|\left\{ (e_1,\dots,e_n)\in \R^n: v=t-\sum^n_{j=1} e_j b^*_j \in \tL \mbox{ and }  \norm{\sum^n_{j=1} e_j b^*_j} \leq \frac{n}{2} \norm{b^*_n} \right\}\right| 
\\ = \frac{\pi^{n/2}}{\Gamma(n/2+1)} \left(\frac{n}{2}\right)^n \frac{ \norm{b^*_n}^n}{\prod^{n}_{j=1} \norm{b^*_j}}.
\end{multline}
Note that if $B=[b_1,\dots,b_n]$ is dual HKZ reduced, then $\norm{b^*_n}= \frac{1}{\lambda_1(\tL^\times)}$ and $\prod^{n}_{j=1} \norm{b^*_j}= \vol(\tL)=\frac{1}{\vol(\tL^\times)}$.
Consequently
\begin{equation}
 \frac{ \norm{b^*_n}^n}{\prod^{n}_{j=1} \norm{b^*_j}} =\left(\frac{\vol(\tL^\times)}{\lambda_1(\tL^\times)}\right)^n= \left(\frac{1}{\alpha}\right)^n.
\end{equation}
So 
\begin{eqnarray}
V & = & \left|\left\{ (e_1,\dots,e_n)\in \R^n: v=t-\sum^n_{j=1} e_j b^*_j \in \tL \mbox{ and }  \norm{\sum^n_{j=1} e_j b^*_j} \leq \frac{n}{2} \norm{b^*_n} \right\}\right| 
\nonumber \\
& = & \left(\frac{\pi}{4}\right)^{n/2}\frac{1}{\Gamma(\frac{n}{2}+1)} \left(\frac{n}{2}\right)^n \left(\frac{1}{\alpha}\right)^n.\label{equ:V}
\end{eqnarray}
While $\alpha$ can be smaller than $1$, the Gaussian heuristic \cite{ng10} suggest that it is bigger than one:
\begin{equation}
\alpha=\frac{\lambda_1(\tL^\times)}{(\vol \tL^\times )^{1/n}} \approx \frac{\Gamma\left(\frac{n}{2}+1\right)^{1/n}}{\sqrt{\pi}}.
\label{equ:GH}
\end{equation}
In fact tests with random integer lattices in the sense of Goldstein and Meier \cite{go03} suggest 
that the heuristic is quite tight for higher dimensions ($>30$). E.g. for dimension $n=30$, the
Gaussian heuristic suggest that $\alpha \approx 1.43$, which is supported by the histogram in Figure \ref{fig:hermite}.
\begin{figure}
\label{fig:hermite}
\centering
\includegraphics[scale=0.43]{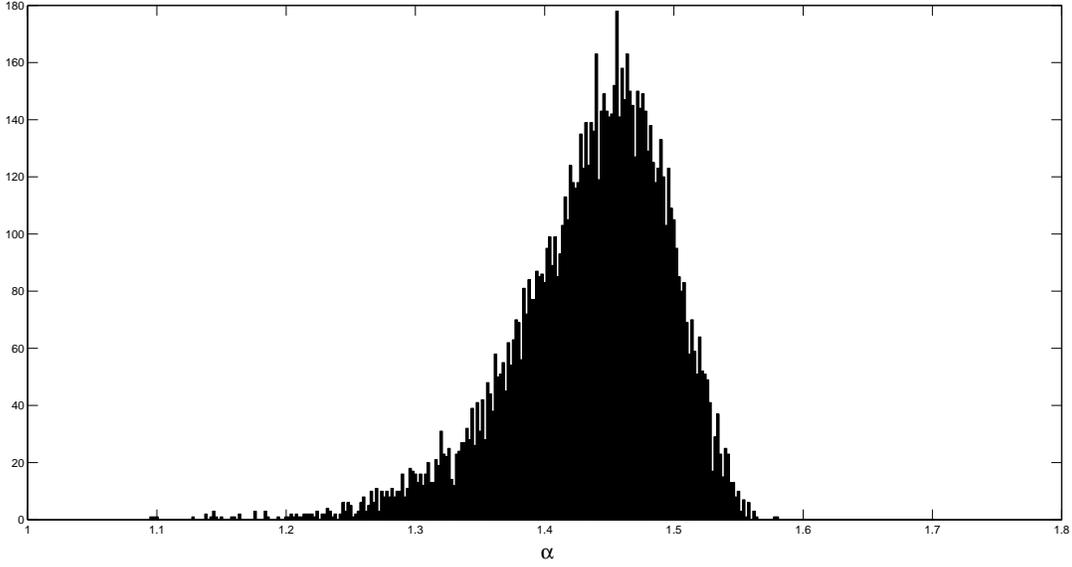}
\caption{Histogram of hermite factors of random lattices with 200-bit prime determinant and dimension $30$.}
\end{figure} 
Assuming that this is in fact the case and plugging
in the Gaussian heuristic into formula (\ref{equ:V}), we obtain
\[
V= \left(\frac{\pi}{4}\right)^{n/2}\frac{1}{\Gamma(\frac{n}{2}+1)} \left(\frac{n}{2}\right)^n \left(\frac{1}{\alpha}\right)^n \approx \left(\frac{\sqrt{n}}{4}\right)^{n}.
\]
So in this case, the average number of points to be enumerated would be
\begin{equation}
 \left(\frac{\sqrt{n}}{4}\right)^{n} = \frac{1}{2^{2n}}n^{n/2}.
\label{equ:ifGH}
\end{equation}

\section{Kannan's algorithm}
\label{sec:Kannan}
In this section we quickly review Kannan's algorithm and the complexity analysis done by Hanrot and Stehl\'e 
\cite{ha07,ha10}.
In contrast to Bl\"omers approach, Kannan's algorithm takes as input a HKZ reduced basis $B$.
Let $e=e_1b^*_1+\dots+e_nb^*_n$ again denote the error vector $v-t$.
Hanrot and Stehl\'e in their analysis use the fact that 
\begin{equation}
 e^2_1\norm{b^*_1}+\dots+e^2_k\norm{b^*_k} \leq \frac{1}{4} \sum^k_{i=1} \norm{b^*_i}^2 \leq \frac{k}{4} \max_{1\leq j\leq k}\norm{b^*_j}^2.
\label{equ:volKa}
\end{equation}
Clearly the volume of the ellipsoid defined by Equation (\ref{equ:volKa}) depends on the lengths of the 
Gram-Schmidt vectors. 
Let us define $C(0):=1$ and for $k \geq 1$
\[
 C(k):= |\{(e_{1},\dots,e_{k}) \in \R^{k}:  \mbox{ Equation (\ref{equ:volKa}) holds}\}|.
\]
We derive a recursive bound for $C(n)$: Let $\tau := \argmax_{1\leq j \leq n}{\norm{b^*_j}}$.
Then we have the following inequality
\begin{equation}
 e^2_{\tau}\norm{b^*_{\tau}}+\dots+e^2_n\norm{b^*_n} \leq e^2_1\norm{b^*_1}+\dots+e^2_n\norm{b^*_n} \leq \frac{n}{4} \norm{b^*_\tau}^2.
\label{equ:ka1}
\end{equation}
As $B$ is HKZ reduced, also the $n-\tau+1$ dimensional lattice $\pi_{\tau}(B)$ is HKZ reduced. Consequently
 by Hermite's \cite{ng10} bound we have that
\[
 \norm{b^*_\tau} \leq \sqrt{\frac{n-\tau+5}{4}} \cdot \vol(\pi_\tau(\tL))^{1/(n-\tau+1)}.
\]
Let us consider 
\[
 C(\tau,n):= |\{(e_{\tau},\dots,e_{n}) \in \R^{n-\tau+1}:  \mbox{ Equation (\ref{equ:ka1}) holds}\}|.
\]
We can compute the volume of $C(\tau,n)$ using the Ellipsoid formula
\begin{eqnarray*}
 C(\tau,n)&=&\frac{\pi^{(n-\tau+1)/2}}{\Gamma(\frac{n-\tau+1}{2}+1)} \left(\frac{n}{4}\right)^{(n-\tau+1)/2} \frac{\norm{b^*_\tau}^{n-\tau+1}}{\prod^n_{j=\tau} \norm{b^*_j}}
 \\&=&\frac{\pi^{(n-\tau+1)/2}}{\Gamma(\frac{n-\tau+1}{2}+1)} \left(\frac{n}{4}\right)^{(n-\tau+1)/2} \frac{\norm{b^*_\tau}^{n-\tau+1}}{\vol(\pi_\tau(\tL))} 
\\ &\leq& \left(\frac{\pi}{4}\right)^{(n-\tau+1)/2}\frac{1}{\Gamma(\frac{n-\tau+1}{2}+1)}  \left(\frac{n-\tau+5}{4}\right)^{(n-\tau+1)/2} n^{(n-\tau+1)/2}.
\end{eqnarray*}
Consequently we get
\[
 C(n) \leq C(\tau-1) C(\tau,n) = C(\tau-1) 2^{c(n-\tau+1)} n^{(n-\tau+1)/2},
\]
for some constant $c$. 
This gives
\[
 C(n) \leq 2^{c'n} n^{n/2},
\]
for some constant $c'$.
As in the previous section we can state the result in the case where the Gaussian heuristic is reached,
i.e. $\norm{b^*_\tau} = \frac{\Gamma\left(\frac{n-\tau+1}{2}+1\right)^{1/(n-\tau+1)}}{\sqrt{\pi}} \cdot \vol(\pi_\tau(\tL))^{1/(n-\tau+1)}$.
This gives 
\[
 C(\tau,n) = \left(\frac{n}{4}\right)^{(n-\tau+1)/2},
\]
and 
\[
 C(n) = \left(\frac{\sqrt{n}}{2}\right)^{n} = \frac{1}{2^{n}}n^{n/2}.
\]

\newpage

\section{Conclusion}
\label{sec:Conclusion}

We have seen that given a dual HKZ-basis, we can solve the closest vector problem 
using the approach by Bl\"omer \cite{bl00} by enumerating 
$n^{c_nn}$ lattice points, with $c_n < 0.75$ for $10< n\leq 2000$. Kannan's algorithm runs faster, as refined analysis thereof implies \cite{ha10}. 
Using Kannan's algorithm, which as input takes a HKZ-basis, it is enough to enumerate 
 $n^{n/2+o(n)}$ lattice points. 
On the other hand we have seen that if the shortest vector of the dual lattice satisfies the Gaussian heuristic,
the transference theorems imply that is enough to enumerate all lattice points 
inside a ellipsoid of volume $\left(\frac{\sqrt{n}}{4}\right)^n$ in order to find the closest vectors.
If the same assumption is made for all gram-schmidt vectors of the HKZ-basis used in Kannan's algorithm, 
the closest lattice points lie inside an ellipsoid of volume $\left(\frac{\sqrt{n}}{2}\right)^n$.
Referring to the case where the Gaussian heuristic as tight as average case, Table \ref{table:1} gives an overview
on the complexities.

\begin{table}[h]
\renewcommand{\arraystretch}{1.3}
\begin{center}
\begin{tabular}{  l l l  l  l l  l } \toprule
Approach	& ~\ ~\ &   original  &~\  ~\ &  refined (worst case) &~\ ~\ & refined (average) \\  \midrule
Kannan	  & ~\ &  	$n^{n+o(n)}$	 &~\ &  $2^{O(n)}n^{n/2}$ &~\ &  $2^{-2n}n^{n/2}$    \\  
Bl\"omer  & ~\ & 	$n!$		 &~\ &  $n^{c_nn}$	&~\ &  $2^{-n}n^{n/2}$  \\ \toprule
\end{tabular}
\end{center}
\caption{Overview on the number of points to enumerate.}
\label{table:1}
\end{table}


\begin{appendix}
\section{Computation of $V_{\tau,k}$ in Section \ref{sec:further}}
\label{sec:appendix}
\begin{eqnarray*}
 V_{\tau,k} &=& \frac{\pi^{(k-\tau)/2}}{\Gamma\left(\frac{k-\tau}{2}+1\right)} \left(\frac{\prod^k_{j=\tau+1}\frac{j}{2}}{\prod^k_{j=\tau+2}\left( \frac{j^2}{4} -\frac{1}{4}\right)^{1/2}}\right)^{k-\tau+1} \prod^k_{j=\tau+2} \frac{ \prod^j_{i=\tau+2}\left( \frac{i^2}{4} -\frac{1}{4}  \right)^{1/2}}{\prod^{j-1}_{i=\tau+1} \frac{i}{2}} 
\\ &=&  \frac{\pi^{(k-\tau)/2}}{\Gamma\left(\frac{k-\tau}{2}+1\right)} \left(\frac{\tau+1}{2}\right)^{k-\tau} \left(\prod^k_{j=\tau+2}\frac{\frac{j}{2}}{\left( \frac{j^2}{4} -\frac{1}{4}\right)^{1/2}}\right)^{k-\tau} \cdot
\\ & & \frac{k!}{(\tau+1)^{k-\tau-1}(\tau+1)!}\prod^k_{j=\tau+2}\prod^{j}_{i=\tau+2} \frac{ \left( \frac{i^2}{4} -\frac{1}{4}  \right)^{1/2}}{ \frac{i}{2}}
\\ &=&  \frac{\pi^{(k-\tau)/2}}{\Gamma\left(\frac{k-\tau}{2}+1\right)} \frac{k!}{\tau! 2^{k-\tau}} \left(\frac{(\tau+2)k}{(\tau+1)(k+1)} \right)^{(k-\tau)/2} \left(\frac{\tau+1}{\tau+2}\right)^{(k-\tau-1)/2} \left(\frac{k+1}{\tau+2}\right)^{1/2}
\\ &=&  \frac{\pi^{(k-\tau)/2}}{\Gamma\left(\frac{k-\tau}{2}+1\right)} \frac{k!}{\tau! 2^{k-\tau}} \left(\frac{k+1}{\tau+1}\right)^{1/2} \left(\frac{k}{k+1}\right)^{(k-\tau)/2}
\\ &\leq& \frac{\pi^{(k-\tau)/2}}{\Gamma\left(\frac{k-\tau}{2}+1\right)} \frac{k!}{\tau! 2^{k-\tau}} \left(\frac{k}{\tau+1}\right)^{1/2} 
\\&=& \left(\frac{\pi}{4}\right)^{(k-\tau)/2} \frac{1}{\Gamma\left(\frac{k-\tau}{2}+1\right)} \frac{k!}{\tau! } \left(\frac{k}{\tau+1}\right)^{1/2} 
\end{eqnarray*}
\end{appendix}

\bibliographystyle{plain}        

\end{document}